\journal{Linear Algebra and its Applications}
\DeclareMathOperator{\Hess}{Hess}
\DeclareMathOperator{\grad}{grad}
\def\ib{\mathbf{i}}
\def\C{\mathbb{C}}
\def\R{\mathbb{R}}
\def\V{\mathcal{V}}
\def\M{\mathcal{M}}
\def\B{{Q}}
\def\N{\mathcal{N}}
\def\O{\mathcal{O}}
\def\S{\mathcal{S}}
\def\T{\mathcal{T}}
\def\U{\mathcal{U}}
\def\cC{\mathcal{C}}
\def\cR{\mathcal{R}}
\def\x{\mathbf{x}}
\def\NN{\mathbb{N}}
\def\PP{\mathbb{P}}
\def\RR{\mathbb{R}}
\def\SS{\mathbb{S}}
\def\rank{\mathrm{rank}}
\newcommand{\apolar}[1]{\langle{#1}\rangle}
\newcommand{\bm}[1]{\mathbf{#1}}
\newcommand{\dbl}[1]{\check{#1}}
\def\rank{\mathrm{rank}}
\newcommand{\td}{\ensuremath{\mathcal{T}^d_{n}}}
\newcommand{\ts}{\ensuremath{\mathcal{S}^d_{n}}}
\newcommand{\hp}{\ensuremath{\mathbb{C}[\x]_{d}}}
\def\Vnd{\mathcal{V}^{n,d}}
\def\sec{\overline{\Sigma}}
\newcolumntype{?}{!{\vrule width 2pt}}
\def\thickhline{%
  \noalign{\ifnum0=`}\fi\hrule \@height \thickarrayrulewidth \futurelet
   \reserved@a\@xthickhline}
\def\@xthickhline{\ifx\reserved@a\thickhline
               \vskip\doublerulesep
               \vskip-\thickarrayrulewidth
             \fi
      \ifnum0=`{\fi}}
\newlength{\thickarrayrulewidth}
\newcommand*{\addFileDependency}[1]{
  \typeout{(#1)}
  \@addtofilelist{#1}
  \IfFileExists{#1}{}{\typeout{No file #1.}}
}
\theoremstyle{definition}
\newtheorem{definition}{Definition}[section]
\newtheorem{remark}[definition]{Remark}
\newtheorem{example}[definition]{Example}
\theoremstyle{plain}
\newtheorem{lemma}[definition]{Lemma}
\newtheorem{proposition}[definition]{Proposition}
\begin{document}

\begin{frontmatter}



\title{Riemannian Newton optimization methods for the symmetric tensor 
    approximation problem}


\author[inst1,inst2]{Rima Khouja\corref{mycorrespondingauthor}}
\cortext[mycorrespondingauthor]{Corresponding author}
\ead{rima.khouja@inria.fr}

\affiliation[inst1]{organization={Inria d’Université Côte d’Azur},
            addressline={Aromath}, 
            city={Sophia Antipolis},
            country={France}}

\author[inst2]{Houssam Khalil}
\author[inst1]{Bernard Mourrain}

\affiliation[inst2]{organization={Laboratory of Mathematics and its Applications LaMa-Lebanon},
            addressline={Lebanese University},
            country={Lebanon}}

\begin{abstract}
The Symmetric Tensor Approximation problem (STA) consists of approximating a symmetric tensor or a homogeneous polynomial by a linear combination of symmetric rank-1 tensors or powers of linear forms of low symmetric rank. We present two new Riemannian Newton-type methods for low rank approximation of symmetric tensor with complex coefficients.

The first method uses the parametrization of the set of tensors of rank at most $r$ by weights and unit vectors.
Exploiting the properties of the apolar product on homogeneous polynomials combined with  efficient tools from complex optimization, we provide an explicit and tractable formulation of the Riemannian gradient and Hessian, leading to Newton iterations with local quadratic convergence. We prove that under some regularity conditions on non-defective tensors in the neighborhood of the initial point, the Newton iteration (completed with a trust-region scheme) is converging to a local minimum.

The second method is a Riemannian Gauss--Newton method on the Cartesian product of Veronese manifolds. An explicit orthonormal basis of the tangent space of this Riemannian manifold is described. We deduce the Riemannian gradient and the Gauss--Newton approximation of the Riemannian Hessian. We present a new retraction operator on the Veronese manifold.

We analyze the numerical behavior of these methods, with an initial point
provided by Simultaneous Matrix Diagonalisation (SMD).
Numerical  experiments  show  the good numerical behavior  of  the  two methods in different cases and in comparison with existing state-of-the-art methods.
\end{abstract}

\begin{keyword}
symmetric tensor decomposition\sep homogeneous polynomials\sep Riemannian optimization\sep Newton method\sep retraction\sep complex optimization\sep trust region method\sep Veronese manifold.

\MSC[] 15A69\sep 15A18\sep 53B20\sep 53B21\sep 14P10\sep 65K10\sep 65Y20\sep 90-08
\end{keyword}

\end{frontmatter}


\section{Introduction}
A symmetric tensor $T$ of order $d$ and dimension $n$ in $\T^d(\C^n)=\C^{n}\otimes\cdots\otimes\C^{n}= \T_{n}^{d}$ is a special case of tensors, where its entries do not change under any permutation of its $d$ indices. We denote their set by $\S^d(\C^n)=\ts$.
The symmetric tensor decomposition problem consists of decomposing a symmetric tensor $T \in \ts$ into a linear combination of symmetric tensors of rank one i.e.
\begin{equation}\label{e1}
    T=\sum_{i=1}^{r}{w_i \underbrace{{v_i\otimes...\otimes v_i}}_{d~\text{times}}},~~ w_i \in \C,~ v_i\in \C^n
\end{equation}
For a multilinear tensor, its minimal decomposition as a sum of tensor products of vectors is called the Canonical Polyadic Decomposition \cite{doi:10.1002/sapm192761164}. We have a correspondence between $\ts$ and the set of homogeneous polynomials of degree $d$ in $n$ variables denoted $\C[x_1,\dots,x_n]_d=:\C[\x]_d$. Using this correspondence, (\ref{e1}) is equivalent to express the homogeneous polynomial $\bm p$ associated to $T$ as a sum of powers of linear forms, which is by definition the classical Waring decomposition i.e.
\begin{equation}\label{e2}
    \bm p=\sum_{i=1}^{r}{w_i (v_{i,1}x_1+\dots+v_{i,n}x_n)^d},~~ w_i \in \C,~ v_i\in \C^n
\end{equation}
The smallest $r$ such that this decomposition exists is by definition the symmetric rank of $\bm p$ denoted by $\rank_s(\bm p)$. Let $d\ge3$. The generic symmetric rank denoted by $r_g$, is given by the Alexander--Hirschowitz theorem \cite{Alexander1995} as follows: $r_g=\big\lceil{\frac{1}{n}\binom{n+d-1}{d}}\big\rceil$ for all $n, d\in\mathbb{N}$, except for the following cases:
$(d,n)\in\left\{(3,5),(4,3),(4,4),(4,5)\right\}$, where it should be increased by $1$.
We say that $T$ is of subgeneric rank, if its rank $\rank_{s}(T)=r$ in (\ref{e2}) is strictly lower than $r_g$. In this case, a strong property of uniqueness of the Waring decomposition holds \cite{Chiantini_2016}, and the symmetric tensor $T$ is called identifiable, unless in three exceptions which are cited in \cite[Theorem 1.1]{Chiantini_2016}, where there are exactly two Waring decompositions. This identifiability property forms an important key strength of the Waring or equivalently the symmetric tensor decomposition. It can explain why this decomposition problem appears in many applications for instance in the areas of mobile communications, in blind identification of under-determined mixtures, machine learning, factor analysis of k-way arrays, statistics, biomedical engineering, psychometrics, and chemometrics. See e.g. \cite{comon:hal-00347139,COMON20062271,doi:10.1137/S0895479896305696,doi:10.1002/cem.908,khouja2021tensor} and references therein. The decomposition of the tensor is often used to recover structural information in the application problem.

The Symmetric Tensor Approximation problem (STA) consists of finding the closest symmetric tensor to a given symmetric tensor $T \in \ts$, of low symmetric rank. Equivalently, for a given $r\in \NN^*$, it consists of approximating a homogeneous polynomial $\bm p$ associated to a symmetric tensor $T$ by an element in $\Sigma_{r}$, where $\Sigma_{r}=\left\{\bm q\in\hp\mid \rank_s(\bm q)\le r\right\}$, i.e.
\begin{equation*}
    \text{(STA)}~~~\min_{\bm q\in\Sigma_{r}}\tfrac{1}{2}||\bm p-\bm q||_d^2.
\end{equation*}
Since in many problems, the input tensors are often computed from measurements or statistics, they are known with some errors on their coefficients and computing an approximate decomposition of low rank often gives better structural information than the exact or accurate decomposition of the approximate tensor \cite{allman2009,JMLR:v15:anandkumar14b,Garcia_2005}.

For matrices, the best low rank approximation can be computed via Singular Value Decomposition (SVD).
Higher Order Singular Value Decomposition (HOSVD) has been investigated to compute a multilinear rank approximation of a tensor
\cite{doi:10.1137/S0895479896305696, doi:10.1137/S0895479898346995, 
VannieuwenhovenNewTruncationStrategy2012}, this, in contrast to the matrix case, does not give the best multilinear rank approximation (see for instance inequality (5) in \cite{Kressner_SV_2013}).

A classical approach for computing an approximate tensor decomposition of low rank is the so-called Alternating Least Squares (ALS) method. It consists of minimizing the distance between a given tensor and a low rank tensor by alternately updating the different factors of the tensor decomposition, solving a quadratic minimization problem at each step. See e.g. \cite{Carroll1970, 3923712d8c7b4f8da616f88a35b3f34b,harshman70,KoBa09}. This approach is well-suited for tensor represented in $\td$ but it looses the symmetry property in the internal steps of the algorithm.
The space in which the linear operations are performed is of large dimension $n^d$ compared to the dimension $\genfrac(){0pt}{1}{n+d-1}{d}$ of $\ts$ when $n$ and $d$ grow. Moreover the convergence is slow \cite{EMH,doi:10.1137/110843587}.

Other iterative methods such as quasi-Newton methods have been considered for low rank tensor approximation problems to improve the convergence speed. See for instance \cite{Hayashi1982,doi:10.1080/10618600.1999.10474853,doi:10.1137/100808034,doi:10.1137/090763172,doi:10.1137/120868323,10.1016/j.csda.2004.11.013}. A Riemannian Gauss--Newton algorithm with trust region scheme was presented in \cite{Breiding2018}, to approximate a given real multilinear tensor by one of low rank. The Riemannian optimization set is a Cartesian product of Segre manifolds (i.e. manifolds of real multilinear tensors of rank one).
The retraction on the Segre manifold, called ST-HOSVD, is based on sequentially truncated HOSVD \cite{Kressner_SV_2013,hackbusch2012tensor,VannieuwenhovenNewTruncationStrategy2012}. Moreover, an algorithm, called hot restarts, was introduced in \cite{Breiding2018} to avoid ill-conditioned decompositions. 
Closely related to these iterative methods, the condition number of join decompositions such as tensor decompositions is studied in \cite{Breiding2018b}.

Optimization techniques based on quasi-Newton iterations for block term decompositions of multilinear tensors over the complex numbers have also been presented in \cite{Sorber2012,doi:10.1137/120868323}. In \cite{doi:10.1137/090763172} quasi-Newton and limited memory quasi-Newton methods for distance optimization on products of Grassmannian manifolds are designed to deal with the Tucker decomposition of a tensor and applied for a low multilinear rank tensor approximation. In all these approaches, an approximation of the Hessian is used to compute the descent direction, and the local quadratic convergence cannot be guaranteed.

Specific investigations have been developed, in the case of best rank-$1$ approximation. The problem is equivalent to the optimization of a polynomial on the product of unitary spheres (see e.g. \cite{doi:10.1137/S0895479898346995,doi:10.1137/110835335}). Global polynomial optimization methods can be employed over the real or complex numbers, using for instance convex relaxations and semidefinite programming \cite{doi:10.1137/130935112}. However, the approach is facing scalability issues in practice for large size tensors.

In relation with polynomial representation and multivariate Hankel matrix properties, another least square optimization problem is presented in \cite{NieLowRankSymmetric2017}, for low rank symmetric tensor approximation. Good approximations of the low rank approximation are obtained for small enough perturbations of low rank tensors. More recently, a method for decomposing real even-order symmetric tensors, called Subspace Power Method (SPM), has been proposed in \cite{KileelSubspacepowermethod2019}. It is based on a power method associated to the projection on subspaces of eigenvectors of the Hankel operators and has a linear convergence.
\subsection{Contributions}In this paper, we present two new Riemannian Newton-type methods for the low rank approximation problem (STA) for symmetric tensors with complex coefficients.

The first method uses the parametrization of the set of tensors of rank at most $r$ by weights and vectors on the unit sphere.
Exploiting the properties of the apolar product on homogeneous polynomials combined with  efficient tools from complex optimization, we provide an explicit and tractable formulation of the Riemannian gradient and Hessian, leading to Newton iterations with local quadratic
convergence. We prove that under some regularity conditions on non-defective tensors in the neighborhood of the initial point, the iteration (completed with a trust-region scheme) is converging to a local minimum.

The second method is a Riemannian Gauss--Newton method on the Cartesian product of Veronese manifolds.
We describe an explicit orthonormal basis of the tangent space of this Riemannian manifold. We use this basis to obtain the Riemannian gradient and the Gauss--Newton approximation of the Riemannian Hessian. We present an approximation method for a given homogeneous polynomial in $\hp$ into linear form to the $d\textsuperscript{$th$}$ power, based on the rank-1 truncation of the SVD of Hankel matrix associated to the homogeneous polynomial. From this approximation method, we propose a new retraction operator on the Veronese manifold. We point out that the Riemannian Gauss--Newton iteration on the Cartesian product of Segre manifolds presented in \cite{Breiding2018} is related to our approach in the sense that the design of the algorithm depends mainly on the geometry of the targeted manifold (Segre manifold in the multilinear case, Veronese manifold in the symmetric case) and its tangent space. In this context, the Riemannian Gauss--Newton iteration that we describe is adapted to the symmetric setting by considering the reduced vector space $\C[\x]_d$ and by exploiting the apolar identities. In our approach we consider symmetric tensors with complex coefficients. The constraint set is parameterized via the complex Veronese manifolds, which leads us to a complex optimization problem with geometric constraints, and this, to the best of our knowledge, has not been addressed previously in tensor approximation. 

We analyze the numerical behavior of these methods, choosing for the initial point
the approximate decomposition provided by the Simultaneous Matrix Diagonalisation (SMD) of a pencil of Hankel matrices \cite{harmouch:hal-01440063,mourrain:hal-01367730}.
Numerical  experiments  show  the good numerical behavior  of  the  new  methods for the best rank-1  approximation  of real-valued  symmetric tensors, for low rank approximation of sparse symmetric tensors, and against  perturbations of symmetric tensors of low rank. Comparisons with existing state-of-the-art methods corroborate this analysis.
\subsection{Outline}
The paper is structured as follows. In \cref{Notation and preliminaries} we give the main notation and preliminaries. In \cref{non-defective}, we describe the set of non-defective rank-$r$ symmetric tensors.
In subsection \ref{formulation}, we formulate the STA problem as a Riemannian least square optimization problem using the parametrization by weights and unit vectors.
We compute explicitly the Riemannian gradient vector and the Hessian matrix in subsection \ref{computation} (the proofs are in \ref{append}) and describe the retraction in subsection \ref{retnr}.
In subsection \ref{ver}, we describe the Riemannian Gauss--Newton method on the product of Veronese manifolds. We present in subsection \ref{retraction1} a new retraction operator on the Veronese manifold with its analysis.
In subsection \ref{trust}, we recall the trust-region extension scheme, and prove under some regularity assumptions the convergence of the exact Riemannian Newton method with trust region steps to a local minimum of the distance function.
Numerical experiments are featured in \cref{num}. The final section is for our conclusions and outlook.

\section{Notation and preliminaries}\label{Notation and preliminaries} We use similar notation as in \cite{Comon2008}.
We denote by $\td=\T^d(\C^n)=\C^{n}\otimes\cdots\otimes\C^{n}$ the outer product $d$ times of $\C^n$. The set of symmetric tensors in $\td$ is denoted $\ts$.
We have a correspondence between $\ts$ and the set of the homogeneous polynomials of degree $d$ in $n$ variables $\C[x_1,\dots,x_n]_d:=\C[\x]_d$.
This allows to reduce the dimension of the ambient space of the problem from $n^d$ (dimension of $\td$) to $s_{n,d}:=\binom{n+d-1}{d}$ (dimension of $\ts \sim \C[\x]_d$).
The bold letters $\bm p, \bm q$ denote homogeneous polynomials in $\hp$ or equivalently elements in $\ts$.
A homogeneous polynomial $\bm p$ in \hp~can be written as: $\bm p=\sum_{|\alpha|=d}{\binom{d}{\alpha}p_{\alpha}\x^{\alpha}}$, where $\x:=(x_1, \dots, x_n)$ is the vector of the variables $x_1, \dots, x_n$, $\alpha=(\alpha_1, \dots, \alpha_n)$ is a vector of the multi-indices in $\NN^n$, $\vert \alpha \vert = \alpha_{1}+ \cdots + \alpha_{n}$, $p_\alpha\in\C$, $\x^\alpha:=x_1^{\alpha_1}\dots x_n^{\alpha_n}$ and $\binom{d}{\alpha}:=\frac{d!}{\alpha_1!\dots\alpha_n!}$.
The superscripts $.^t$, $.^*$ and $.^{-1}$ are used respectively for the transpose, Hermitian conjugate, and the inverse matrix. Let $A\in\C^{n\times n}$, we denote by $\sqrt{A}$ a matrix $B\in\C^{n \times n}$ such that $A=B^2$. The complex conjugate is denoted by an overbar, e.g., $\bar{w}$. We use parentheses to denote vectors e.g. $W=(w_i)_{1\le i\le r}$, and the square brackets to denote matrices e.g. $V=[v_i]_{1\le i\le r}$ where $v_i$ are column vectors. The concatenation of vectors $v_{1}, v_{2}, \ldots$ is denoted $(v_{1};v_{2}; \ldots)$.
\begin{definition}\label{norm}
  For   $\bm p=\sum_{|\alpha|=d}{\binom{d}{\alpha}p_{\alpha}\x^{\alpha}}$ and $\bm q=$ $\sum_{|\alpha|=d}{\binom{d}{\alpha}q_{\alpha}\x^{\alpha}}$ in $\C[\x]_d$,  their apolar product is
$$
\langle \bm p,\bm q\rangle_d:=\sum_{|\alpha|=d}{\binom{d}{\alpha}\bar{p}_{\alpha}q_{\alpha}}.
$$
The apolar norm of $\bm p$ is $||\bm p||_d=\sqrt{\langle \bm p,\bm p\rangle_d}=\sqrt{\sum_{|\alpha|=d}{\binom{d}{\alpha}\bar{p}_{\alpha}p_{\alpha}}}$.
\end{definition}
The following properties of the apolar product can be verified by direct calculus:
\begin{lemma}\label{ap}
Let $\bm l=(v_1x_1+\cdots+v_nx_n)^d:=(v^t\x)^d \in \hp$ where $v=(v_i)_{1\le i\le n}$ is a vector in $\C^n$, $\bm q\in\C[\x]_{(d-1)}$, we have the following two properties:
\begin{enumerate}
    \item $\langle \bm l,\bm p\rangle_d=\bm p(\bar{v})$, $\langle \bm p, \bm l\rangle_d=\bar{\bm p}(v)$,
    \item $\langle \bm p,x_i\bm q\rangle_d=\frac{1}{d}\langle \partial_{x_i}\bm p,\bm q\rangle_{d-1}, \langle x_i\bm q, \bm p\rangle_d=\frac{1}{d}\langle \bm q,\partial_{x_i} \bm p\rangle_{d-1},~~\forall 1\le i\le n$.
\end{enumerate}
\end{lemma}
\section{The set of non-defective rank-$r$ symmetric tensors}\label{non-defective}

Let $\Sigma_{r}\subset \ts$ be the set of symmetric tensors of symmetric rank at most $r$.
A symmetric tensor $\bm t\in \Sigma_r$ is the sum of $d^{\textup{th}}$ powers
\begin{equation}\label{decomp}
\bm t = \sum_{i=1}^{r} (v_{i}^{t} \x)^{d},~\text{for}~v_{i} \in \C^{n}.
\end{equation}
 It is a point in the image of the following map:
\begin{align*}
 \psi_{r}:&~\C^{n \times r}:=\C^n\times\cdots\times\C^{n}\longrightarrow\hp\\
 &[v_i]_{1\le i\le r}\longmapsto\psi_r((v_i)_{1\le i\le r})=\sum_{i=1}^{r}{(v_i^t\x)^d}.
\end{align*}
The $d^{\textup{th}}$ power $(v_{i}^{t} \x)^{d}$ with $v_i\neq 0$ are symmetric tensors of rank-$1$, which are on the so-called Veronese manifold.
\begin{definition}
Let $\psi:\C^n\to\hp,~v\mapsto(v^t\x)^d=\sum_{|\alpha|=d}{\binom{d}{\alpha}v^\alpha \x^\alpha}$.
The Veronese manifold in $\hp$ denoted by $\V^{n,d}$ is the set of linear forms in $\C[\x]_{1}-\left\{0\right\}$ to the $d$\textsuperscript{$th$} power. It is  the image of $\C^n-\left\{0\right\}$ by $\psi$.
\end{definition}
The Veronese variety studied in algebraic geometry is the algebraic variety of the projective space $\PP^{s_{n,d}-1}$ associated to $\Vnd$, where $s_{n,d}=\text{dim}~ \C[\x]_d$ \cite{AG, secant,landsberg2011tensors}. The tangent space of $\Vnd$ at a point $p=(v^{t}\x)^{d}$ is the vector space spanned by $\langle x_{1} (v^{t}\x)^{d-1}, \ldots$, $x_{n} (v^{t}\x)^{d-1} \rangle$, that is the linear space $T_{p}(\Vnd)=\{ (u^{t} \x) (v^{t } \x)^{d-1}\mid u\in \C^{n}\}$.

The Zariski closure $\sec_{r}$ of $\Sigma_{r}$ is called the $(r-1)${\em th-secant variety} of the Veronese variety. For $r>1$, the algebraic variety $\sec_{r}$ is not smooth and contrarily to the
case of matrices, singular points of $\sec_{r}$ can have a rank $>r$, as shown in the following example.
For $d>2$, $\bm p=(v_0^t\x)(v_1^t\x)^{d-1}\in \hp$ with $v_0 \neq v_1\in\C^n$ is in the (Zariski) closure of $\sec_{2}$ since
$(v_0^t\x)(v_1^t\x)^{d-1}= \lim_{\delta \rightarrow 0}\frac{1}{d\, \delta}(((v_1+\delta v_0)^{t} \x)^{d} - (v_{1}^{t}\x)^{d})$ but its symmetric rank is $d>2$ \cite[Proposition 5.6]{Comon2008}.

To avoid these singularities, we will restrict our theoretical analysis to points of $\Sigma_{r}$ where the map $\psi_{r}$ is a local embedding, since in the vicinity of singularities, the best low rank approximation problem is ill-posed (as shown by the previous example).
The map $\psi_{r}$ is a local embedding at $y = [v_{i}]_{1\le i\le r}\in {\C}^{n \times r}$ iff
$$
J\psi_{r}(y) = d~[x_{1} (v_{1}^{t}\x)^{d-1}, \ldots, x_{n} (v_{1}^{t}\x)^{d-1}, \ldots, x_{1} (v_{r}^{t}\x)^{d-1}, \ldots, x_{n} (v_{r}^{t}\x)^{d-1}]
$$
is of rank $n\, r$. The tensors $\psi_{r}(y)$ with  $y \in {\C}^{n \times r}$ such that $\rank\, J\psi_{r}(y)= n\, r$ are called {\em non-defective}.
The set of non-defective tensors of rank $r$, locally embedded in $\C[\x]_d$, is the image by a local diffeomorphism of a Riemannian manifold and it is denoted $\Sigma_r^{\mathrm{reg}}$.
The map $\psi_{r}$ is a local diffeomorphism between an open subset of $\C^{n \times r}$ and $\Sigma_{r}^{\mathrm{reg}} \subset \sec_{r}$.

Hereafter, we consider the cases where $d>2$ and the rank $r$ is strictly subgeneric, i.e. $r<r_g=\big\lceil\frac{1}{n}\binom{n+d-1}{d}\big\rceil$, where $r_g$ is the generic symmetric rank (except for $(d,n)\in \{(3,5), (4,3), (4,4), ($\linebreak$4,5) \}$ or $d=2$) by Alexander--Hirschowitz theorem \cite{Alexander1995}.
Using “Terracini's lemma” (see e.g. \cite[Lemma 5.3.1.1]{landsberg2011tensors}), we have that $\Sigma_r^{\mathrm{reg}}$ is a dense open subset of $\sec_{r}$ iff the dimension of $\sec_{r}$ is the expected dimension $n\,r$. In this case, $\sec_{r}$ is also said to be {\em  non-defective}. Alexander and Hirschowitz \cite{Alexander1995} proved that $\sec_{r}$ is non-defective when $r<r_{g}$ (the exceptional defective cases for $d>2$ being $(d,n,r)\in \{(3,5,7), (4,3,5), (4,4,9), (4,5,14) \})$.

It is also known that for $r<r_{g}$, generic tensors of $\psi_{r}$ have a unique decomposition, i.e. a unique inverse image by $\psi_{r}$ up to permutations, except for $(d,n,r)\in\{(6,2,9),(4,3,8),(3,5,9)\}$, see \cite[Theorem 1.1]{Chiantini_2016}.
\section{Riemannian optimization for the STA problem}
In this section, we use the framework of Riemannian optimization \cite{AbsMahSep2008} to solve the STA problem. See also \cite{Breiding2018,hackbusch2012tensor,Kressnerl_SV_2013} for real multilinear tensors. We develop a Riemannian Newton algorithm and a Riemannian Gauss--Newton algorithm exploiting the properties of symmetric tensors to obtain explicit and simplified formulation.
We consider distance minimization problems for symmetric tensors with complex decompositions for both algorithms.\\
Riemannian optimization methods are solving optimization problems over a Riemannian manifold $\M$ \cite{AbsMahSep2008}.
Given $\bm p\in\ts\sim\hp$, we consider hereafter the following least square minimization problem
\begin{equation}\label{leasteq}
\min_{y\in\M}f(y)
\end{equation}
where $f:\M\rightarrow\R$ is half the square distance function to ${\bm p}$ i.e. $f(y)=\frac{1}{2}\|F(y)\|_d^2$ with $F(y)=\Phi_r(y)-\bm p$, such that $\Phi_{r}:~\M\rightarrow\hp$ is a parametrization map of $\Sigma_{r}$ the set of symmetric tensors of symmetric rank bounded by $r$, and $\M$ is a Riemannian manifold. A Riemannian optimization method for solving (\ref{leasteq}) requires a Riemannian metric. Since we will assume that $\M$ is embedded in some space $\R^M$, we will take the metric induced by the Euclidean space $\R^{M}$.


We propose to parametrize $\Sigma_{r}$, first via weights and unit vectors. We describe an exact Riemannian Newton method for this formulation in subsection \ref{formulation}. Secondly, we parametrize $\Sigma_{r}$ via sums of the $d$\textsuperscript{$th$} power of linear forms that is as sums of tensors in ${\Vnd}$. We develop a Riemannian Gauss--Newton method for this formulation in subsection \ref{ver}. A dogleg trust-region scheme in subsection \ref{trust} is added to the two algorithms.

Recall that a Riemannian Newton method for solving (\ref{leasteq}) \cite[Chapter 6]{AbsMahSep2008} consists of starting with an initial guess $y_0\in\M$ and generating a sequence $y_1,y_2, \dots$ in $\M$, with respect to the following process:\begin{equation}\label{newtoneq}
y_{k+1}\leftarrow R_{y_k}(\eta_k)\hspace{0.3cm}\text{with} \Hess f(y_k)[\eta_k]=-\grad f(y_k);\end{equation}
where $\grad f(y_k)$ and $\Hess f(y_k)$ are respectively the Riemannian gradient and Hessian of $f$ at $y_k$ on $\M$, and $R_{y_{k}}:T_{y_k}\M\to\M$ is a retraction operator from the tangent space $T_{y_{k}}\M$ to $\M$.

A Riemannian Gauss--Newton method \cite[Chapter 8]{AbsMahSep2008} is a Riemannian quasi-Newton method where the Riemannian Hessian in (\ref{newtoneq}) is replaced by $(DF(y_k))^*\circ(DF(y_k))$, namely the Gauss--Newton approximation of the Hessian.

The properties of a retraction map are described hereafter:
\begin{definition}\cite{AbsMahSep2008, 10.1093/imanum/22.3.359,Kressnerl_SV_2013}\label{conditions}
  Let $\M$ be a manifold and $y\in \M$. A retraction $R_y$ is a map $T_y\M\rightarrow\M$, which satisfies the following properties :
\begin{enumerate}
    \item $R_y(0_y)=y$;
    \item there exists an open neighborhood $\U_y\subset T_y\M$ of~$0_y$ such that the restriction on $\U_y$ is well-defined and a smooth map;
    \item $R_y$ satisfies the local rigidity condition\begin{equation*}
        DR_y(0_y)=id_{T_y\M},
    \end{equation*}
    where $id_{T_y\M}$ denotes the identity map on $T_y\M$.
\end{enumerate}
\end{definition}
We will also use the following property which is straightforward to show:
\begin{lemma}\label{decomposeret}
Let $\M_1,\ldots,\M_r$ be manifolds, $y_i\in\M_i$ and $\M=\M_1\times\cdots\times\M_r$ and $y=(y_1, \ldots, y_r)\in \M$. Let $R_i: T_{y_i}\M_i\rightarrow\M_i$ be retractions. Then $R_y:T_y\M \rightarrow \M$ defined as follows:
$R_{y}(\xi_{{1}},\ldots,\xi_{{r}})=(R_{y_{1}}(\xi_{{1}}),\ldots,R_{y_{r}}(\xi_{{r}}))$ for $\xi_{i}\in T_{y_i}\M_i$, $1\le i\le r$, is a retraction on $\M$.
\end{lemma}

\subsection{Riemannian Newton method for STA}\label{formulation}



We normalize the decomposition (\ref{decomp}) by choosing unit vectors for $v_i$ and positive weights. Namely, we decompose a symmetric tensor $\bm p\in\Sigma_r$ as $\bm p = \sum_{i=1}^{r} w_{i}\, (v_i^t\x)^d$
with $w_{i} \in \R_{+}^{*}$ and $||v_i||=1$, for $1\le i\le r$; by normalizing $v_i$ and multiplying by $w_i:=||v_i||^d$ if $v_i$ is not a unit vector. The vector $(w_i)_{1\le i\le r}$ in this decomposition is called ``the weight vector", and is denoted by $W$. Let $V=[v_i]_{1\le i\le r}\in\C^{n\times r}$
be the matrix of the normalized vectors.

The objective function expressed in terms of these weights and unit vectors is given by $f(W,V)=\tfrac{1}{2}||F(W,V)||_d^2, \text{with}~F(W,V)=\sum_{i=1}^{r} w_{i}\, (v_i^t\x)^d-\bm p$.

The function $f$ is a real valued function of complex variables; such function is  non-analytic, because it cannot satisfy the Cauchy--Riemann conditions \cite{remmert1991theory}.
To apply the Riemannian Newton method, we need the second order differentials of $f$.
As discussed in \cite{Sorber2012}, we overcome the non-analytic problem by converting the optimization problem to the real domain, regarding $f$ as a function of the real and imaginary parts of its complex variables.\\


Let $\N_{r}=\{(W,\Re(V),\Im(V))\mid W\in{\R^*_+}^r, V\in\C^{n\times r}, (\Re(v_i),\Im(v_i))\in\SS^{2n-1}$, $\forall\, 1\le i\le r\}$, where $\SS^{2n-1}$ is the unit sphere in $\RR^{2n}$. Let ${\varphi}_{r}: (w,v_{1}, \ldots, v_{r}, v'_{1}, \ldots, v'_{r} )\in \N_{r} \mapsto \sum_{i=1}^{r} w_{i} ((v_{i}+\ib\, v'_{i})^{t} \x)^{d}$. Hereafter in this subsection, we use the following formulation to compute the different ingredients of a Riemannian Newton method:
\begin{equation*}
    \text{(STA)}_{\N_r}~~~\min_{y\in\N_r} f(y),
\end{equation*}
where $f(y)=\tfrac{1}{2}||F(y)||_d^2, \text{with}~F(y)=\varphi_r(y)-\bm p$.\\

\subsubsection{Computation of the gradient vector and the Hessian matrix}\label{computation}In this section, we present the explicit expressions of the Riemannian gradient and Hessian on $\N_{r}$.
We first describe an orthonormal basis of $T_{y}\N_{r}$ for $y\in \N_{r}$. Then we detail the computation of the gradient and Hessian in this basis, via the differentials of maps in complex and conjugate variables.

\begin{lemma}\label{base}
  Let $y=(w, v_{1}, \ldots, v_{r}, v'_{1},\ldots, v'_{r})\in\N_{r}$. For all $i=1, \ldots, r$ let $\dbl{v}_i=(v_i; v'_{i})\in \SS^{2n-1}$ and let
$$
(I_{2n}-\dbl{v}_i\dbl{v}_i^t)=Q_{i}R_{i} P_{i}
$$
be a rank-revealing QR-decomposition of the projector on $\dbl{v}_{i}^{\perp}$ in $\R^{2n}$,
where $Q_{i}Q_{i}^t=I_{2n}$, $R_{i}$ is upper triangular, and $P_{i}$ is a permutation matrix.

Let $Q_{i,re}$ (resp. $Q_{i,im}$) be the matrix given by the first $n$ rows (resp. the last $n$ rows)
    and the first $2n-1$ columns of $Q_{i}$.
Let $\tilde{Q}=\begin{bmatrix}Q_{re}\\ Q_{im}\end{bmatrix}\in\R^{2nr\times (2n-1)r}$, where $Q_{re}=\mathrm{diag}(Q_{i,re})_{1\le i\le r}$ and $Q_{im}=\mathrm{diag}(Q_{i,im})_{1\le i\le r}$. Then the columns of~~$\B=\mathrm{diag}(I_r,\tilde{Q})$ form an orthonormal basis of $T_y\N_{r}$.
\end{lemma}
\begin{proof}
  We have $T_y\N_{r}\simeq T_w(\R_+^*)^{r}\times T_Z\S_r$, where $\S_r=\{(\Re(V),\Im(V))\mid V\in\C^{n\times r}$,\\$||v_i||^2=1, ~\forall 1\le i\le r\}$ and $Z={(\Re(V),\Im(V))}= (v_{1}, \ldots, v_{r}, v'_{1}, \ldots, v'_{r})\in \R^{n \times 2 r}$.

As $T_w(\R_+^*)^{r}=\R^{r}$, $I_r$ represents an orthonormal basis of $T_w(\R_+^*)^{r}$.

We verify now that $\tilde{Q}$ is an orthonormal basis of $T_Z\S_r$.
For $i=1, \ldots,r$, $\dbl{v}_i\in\SS^{2n-1}\subset\R^{2n}$ and the first $(2n-1)$ columns of the factor $Q_{i}$ of a rank-revealing QR-decomposition of $I_{2n}-\dbl{v}_i\dbl{v}_i^t$ give an orthonormal basis of the image $\dbl{v}_{i}^{\perp}$ of $(I_{2n}-\dbl{v}_i\dbl{v}_i^t)$,~ that is $T_{\dbl{v}_i}\SS^{2n-1}$.

The vector space $T_Z\S_r$, of dimension $r(2n-1)$, is the Cartesian product of the tangent spaces $T_{\dbl{v}_i}\SS^{2n-1}$. Therefore, by construction, the columns of $\tilde{Q}$ form an orthonormal basis of $T_Z\S_r$.

We deduce that $Q=\mathrm{diag}(I_r,\tilde{Q})$ represents an orthonormal basis of $T_y\N_{r}$ in the canonical basis of $\R^{2n r}$.
\end{proof}

Let $\cR_r=\left\{(W,\Re(V),\Im(V))\in \R^{r}\times \R^{n\times r} \times \R^{n\times r} \mid W\in\R^r, V\in\C^{n\times r}\right\}$ and  let $f_R$ be the function $f$ seen as a function on $\cR_{r}$.
The gradient and the Hessian of $f_R$ at a point $p^R \in \cR_{r}$  are called the real gradient and the real Hessian. We denote them by $G^R$ and $H^R$. We will describe their computation, after the next proposition, relating them to the Riemannian gradient and Hessian.
\begin{proposition}\label{prop:newton:nr}
Let $p=(w, v_1, \dots, v_r, v\sp{\prime}_1, \dots, v\sp{\prime}_r )\in\N_r$, $\B\in\R^{(r+2nr)\times( r+(2n-1)r)}$ such that its columns form an orthonormal basis of $T_{y}\N_{r}$. Let $G^R=(g_0, g_1, \dots, g_r, g\sp{\prime}_1,$$ \dots, g\sp{\prime}_r)\in\R^{r+2nr}$ (resp. $H^R\in\R^{(r+2nr)\times (r+2nr)}$) be the gradient vector (resp. the Hessian matrix) of $f_R$ at $p^R$ in the canonical basis. The Riemannian gradient vector (resp. Hessian matrix) of $f$ at $p$ with respect to the basis $Q$ is given by:
$$
G=\B^tG^R, H=\B^t(H^R+S)\B,
$$
where $S=\mathrm{diag}(0_{r\times r},\tilde{S},\tilde{S})$, with $\tilde{S}=\mathrm{diag}(s_1I_n, \dots, s_rI_n)$, $s_i=\langle v_i, g_i\rangle+\langle v\sp{\prime}_i, g\sp{\prime}_i\rangle$.
\end{proposition}
The proof is given in \ref{proof1}.\\
Let us describe now explicitly the real gradient $G^{R}$:
\begin{proposition}\label{gradient}
The gradient $G^R$ of $f_R$ on $\mathcal{R}_{r}$ is the vector
$$
G^{R}= \begin{pmatrix}G_1\\\mathrm{\Re}(G_2)\\-\Im(G_2)\end{pmatrix}\in\R^{r+2nr},
$$
where
\begin{itemize}
\item $G_1=(\sum_{i=1}^{r}{w_i\Re((v_j^*v_i)^d)}-\Re(\bar{\bm p}(v_j)))_{1\le j\le r}\in\R^r$, \item $G_2=(d\sum_{i=1}^{r}{w_iw_j({v}_i^*v_j)^{(d-1)}\bar{v}_i}-w_j\nabla_\x\bar{\bm p}(v_j))_{1\le j\le r}\in\C^{nr}$.
\end{itemize}
\end{proposition}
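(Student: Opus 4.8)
The plan is to compute first the \emph{complex} gradient $G^C$ of $f$ regarded as a function on $C_r$, that is, the Wirtinger derivatives of $f$ with respect to the independent variables $W$, $V$ and $\overline{V}$, and then to transport it to the real gradient $G^R$ through the change of coordinates $G^R=K^TG^C$ established in \cref{lemma3.3}. The advantage of working on $C_r$ is that the objective function admits a closed form in terms of the apolar product, to which \cref{ap} applies directly.

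First I would expand $f(y)=\tfrac12\langle \Sigma_r(y)-P,\Sigma_r(y)-P\rangle_d$ using the sesquilinearity of the apolar product. Writing $L_i:=(v_i^tx)^d$ and $\Sigma_r(y)=\sum_{i=1}^r w_iL_i$, the four resulting terms are simplified by the two identities $\langle L_i,L_j\rangle_d=(v_i^*v_j)^d$ and $\langle L_i,P\rangle_d=P(\bar v_i)$, both consequences of the first property in \cref{ap} together with the elementary observation $v_j^t\bar v_i=v_i^*v_j$; conjugate-linearity in the first slot then gives $\langle P,L_j\rangle_d=\overline{P(\bar v_j)}=\bar P(v_j)$. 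This produces the explicit expression
\begin{equation*}
  f=\tfrac12\Big(\textstyle\sum_{i,j} w_iw_j\,(v_i^*v_j)^d-\sum_i w_i\,P(\bar v_i)-\sum_j w_j\,\bar P(v_j)+||P||_d^2\Big),
\end{equation*}
in which $P(\bar v_i)$ depends holomorphically on $\bar v_i$ alone and $\bar P(v_j)$ on $v_j$ alone.

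Next I would differentiate this expression termwise. For the weight variables, $\partial f/\partial w_j$ combines the two summands of the quadratic bracket into $\sum_i w_i\,2\Re\big((v_i^*v_j)^d\big)$, using $(v_j^*v_i)^d=\overline{(v_i^*v_j)^d}$, and the two linear summands into $-2\Re\big(\bar P(v_j)\big)$; after the overall factor $\tfrac12$ this is exactly the announced $G_1$. For the coefficient variables I would compute the Wirtinger derivative $\partial f/\partial v_j$: since $P(\bar v_i)$ is annihilated by $\partial/\partial v_j$, only the $k=j$ terms of the quadratic sum and the $\bar P$-term survive, and the chain rule $\nabla_{v_j}(v_i^*v_j)^d=d\,(v_i^*v_j)^{d-1}\bar v_i$ yields $\tfrac12\big(d\,w_j\sum_i w_i (v_i^*v_j)^{d-1}\bar v_i-w_j\nabla\bar P(v_j)\big)=\tfrac12 G_2$.

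Finally I would assemble the blocks. By \cref{lemma3.3} the $V$-block of $G^R$ equals $J^T(\partial f/\partial V,\partial f/\partial\overline V)^T$; since $f$ is real-valued we have $\partial f/\partial\overline V=\overline{\partial f/\partial V}$, so the two rows of $J^T$ produce $2\Re(\partial f/\partial V)=\Re(G_2)$ and $-2\Im(\partial f/\partial V)=-\Im(G_2)$, while the $W$-block is left unchanged because the leading block of $K$ is $I_r$. This reproduces the stated $G^R=(G_1,\Re(G_2),-\Im(G_2))^T$. The one delicate point, and the step I would check most carefully, is the sesquilinear bookkeeping: keeping track of which slot carries the conjugation in the apolar product, verifying $v_j^t\bar v_i=v_i^*v_j$, and ensuring that the holomorphic versus antiholomorphic dependence assigns each term correctly to $\partial/\partial v_j$ or $\partial/\partial\bar v_j$, so that the real and imaginary parts recombine with the right signs and the factors $d$ and $\tfrac12$ land precisely where claimed.
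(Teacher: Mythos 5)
Your proof is correct and takes essentially the same route as the paper: both decompose $f$ into the same four apolar terms, compute the Wirtinger derivatives on $C_r$ using \cref{ap}, and transport the result to $R_r$ via $G^R=K^TG^C$ from \cref{lemma3.3}. The only cosmetic difference is that you derive the closed form $\sum_{i,j}w_iw_j(v_i^*v_j)^d$ by sesquilinearity before differentiating, whereas the paper expands $f_1$ in monomial coefficients and re-identifies the apolar products after differentiating termwise.
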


The matrix of the real Hessian can be computed as follows:
\begin{proposition}\label{hessian}
The real Hessian matrix $H^R$ is the following block matrix:
  \begin{equation*}
      H^R=\begin{bmatrix}A&{\Re(B)}^t&-{\Im(B)}^t\\\Re(B)&\Re(C+D)&-\Im(C+D)\\-\Im(B)&\Im(D-C)&\Re(D-C)\end{bmatrix}\in\R^{(r+2nr)\times(r+2nr)},
  \end{equation*}
with
\begin{itemize}
 \item  $A=\Re([({v}_i^*v_j)^d]_{1\le i,j\le r})\in\R^{r\times r}$,
 \item $B=[d w_i({v}_j^*v_i)^{d-1}\bar{v}_j+\delta_{i, j}(d\sum_                                     {l=1}^{r}{w_l({v}_l^*v_i)^{d-1}\bar{v}_l}-\nabla_{\x} \bar{\bm p}(v_j))]_{1\le i, j\le r}\in \C^{nr\times r}$, \text{where} $\delta_{i, j}$ \text{is the Kronecker delta},
         \item $C=\mathrm{diag}[d(d-1)[\sum_{i=1}^{r}{w_iw_j\overline{v_{i,k}}\overline{v_{i,l}}({v}_i^*v_j)^{d-2}}]_{1\le k,l\le n}-w_j\Delta_\x\bar{\bm p}(v_j)]_{1\le j\le r}\in\C^{nr\times nr}$, where $\Delta_\x\bar{\bm p}(v_j):=[\partial_{x_k}\partial_{x_l}\bar{\bm p}(v_j)]_{1\le k, l\le n}$,
         \item $D=[dw_iw_j({v}_i^*v_j)^{d-2}(({v}_i^*v_j)I_n+(d-1)v_jv_i^*)]_{1\le i,j\le r}\in\C^{nr\times nr}.$
\end{itemize}
\end{proposition}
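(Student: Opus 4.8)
The plan is to mirror the proof of \cref{gradient}: I regard $f$ as a function of the complex variables $(W,V,\overline V)\in C_r$, compute the complex Hessian $H^C$ block by block, and then transport it to the real Hessian through the identity $H^R=K^TH^CK$ of \cref{er2}, with $K$ as in \cref{eq1}. The starting point is again the splitting $f=\tfrac12(f_1-f_2-f_3+f_4)$ from the proof of \cref{gradient}, in which only $f_1=\sum_{|\alpha|=d}\binom{d}{\alpha}\bigl(\sum_i w_i\bar v_i^\alpha\bigr)\bigl(\sum_i w_i v_i^\alpha\bigr)$, $f_2=\sum_i w_iP(\bar v_i)$ and $f_3=\sum_i w_i\bar P(v_i)$ contribute to second derivatives, since $f_4=\|P\|_d^2$ is constant. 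Differentiating once more the first-order expressions already obtained for $G^C$, and reusing the two apolar identities of \cref{ap} to evaluate the resulting sums as powers of the Hermitian products $v_i^*v_j$ and as derivatives of $P$, produces every entry of $H^C$.

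I would then organize $H^C$ into the nine blocks indexed by the groups $W$, $V$, $\overline V$ and compute one representative of each conjugate pair, which yields exactly $A,B,C,D$ (up to a factor $\tfrac12$ carried by the $V$-blocks, later absorbed by $J$):
\begin{itemize}
\item the $W$--$W$ block is $A_{ij}=\tfrac12\bigl((v_i^*v_j)^d+(v_j^*v_i)^d\bigr)=\Re\bigl((v_i^*v_j)^d\bigr)$;
\item the $V$--$W$ block is $\tfrac12 B$: the off-diagonal part $dw_i(v_j^*v_i)^{d-1}\bar v_j$ comes from $\partial_{v_i}\partial_{w_j}f_1$, while the $\delta_{ij}$ correction collects the extra term from the explicit factor $w_j$ together with $-\nabla\bar P(v_j)$ from $f_3$;
\item the $V$--$V$ block is $\tfrac12 C$, which is block-diagonal because $\partial_{v_j}f_1$ depends on $V$ only through $v_j$, so a further $v_i$-derivative vanishes unless $i=j$; it carries the $d(d-1)$ term and the polynomial Hessian $\Delta\bar P(v_j)$;
\item the $\overline V$--$V$ block is $\tfrac12 D$: since $v_i^*v_j$ depends on both $v_j$ and $\bar v_i$, differentiating $\partial_{v_j}f_1$ in $\bar v_i$ splits into the Kronecker term $(v_i^*v_j)I_n$ and the rank-one outer product $(d-1)v_jv_i^*$.
\end{itemize}
The reality of $f$ then supplies the remaining blocks for free through $\partial_{\bar v}f=\overline{\partial_v f}$: the $\overline V$--$\overline V$ block equals $\tfrac12\overline C$, the $V$--$\overline V$ block equals $\tfrac12\overline D$ (and $D$ turns out Hermitian, consistent with the commuting of mixed Wirtinger derivatives), and the blocks touching $W$ are the transposes and conjugates $\tfrac12 B^T$, $\tfrac12 B^*$, $\tfrac12\overline B$.

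Finally I would assemble $H^C$ and carry out the congruence $H^R=K^TH^CK$. Writing $K=\diag(I_r,J)$, the $(W,W)$ block is left unchanged as $A$, while $J$ acts on the mixed $W$--$V/\overline V$ row and column, turning $\tfrac12(B^T+B^*)$ into $\Re(B)^T$ and $\tfrac12 i(B^T-B^*)$ into $-\Im(B)^T$, with their transposes forming the first column. The core computation is the $2nr\times2nr$ block $J^T\Gamma J$, where $\Gamma$ is the $V$--$\overline V$ part of $H^C$ with blocks $\tfrac12 C$, $\tfrac12\overline D$, $\tfrac12 D$, $\tfrac12\overline C$; multiplying out and repeatedly using $\tfrac12(X+\overline X)=\Re X$ and $\tfrac1{2i}(X-\overline X)=\Im X$ collapses its four entries to $\Re(C+D)$, $-\Im(C+D)$, $\Im(D-C)$ and $\Re(D-C)$, matching the statement.

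The main obstacle is the block $D$: computing $\partial_{\bar v_i}\partial_{v_j}f_1$ correctly requires tracking that the scalar $v_i^*v_j$ is simultaneously holomorphic in $v_j$ and antiholomorphic in $v_i$, so that the product rule yields \emph{two} matrix contributions, the identity piece and the rank-one piece $v_jv_i^*$, and then checking that the resulting $D$ is Hermitian. The surrounding bookkeeping, namely the $\delta_{ij}$ corrections in $B$ and $C$ coming from the explicit weight factors, and the conjugation accounting that makes the factors $\tfrac12$ cancel exactly against $J$ in the final congruence, is routine but error-prone, and is where most of the verification effort goes.
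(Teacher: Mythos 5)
Your proposal is correct and takes essentially the same route as the paper: compute the complex Hessian $H^C$ block by block by differentiating the complex gradient from \cref{gradient}, use the reality of $f$ to fill in the conjugate blocks, and transport to $H^R$ via the congruence $H^R=K^TH^CK$ of \cref{lemma3.3}. The block-level details you supply (the $\delta_{ij}$ correction in $B$, the block-diagonality of $C$, the identity plus rank-one split in $D$, and the factor-of-two bookkeeping absorbed by $J$) agree with the paper's $A,\tilde B,\tilde C,\tilde D$ and its renaming $B=2\tilde B$, $C=2\tilde C$, $D=2\tilde D$.
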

The \ref{proofs} is devoted to discussing the computation details of the real gradient and Hessian, where the proofs of propositions \ref{gradient} and \ref{hessian} are covered respectively in \ref{proof2} and \ref{proof3}.
\subsubsection{Retraction on $\N_r$}\label{retnr}
To complete this Riemannian Newton method, we need to define a retraction operator on $\N_r$. Let us assume that the Riemannian Newton equation is solved at a point
$y= (w, v_{1}, \ldots, v_{r}, v'_{1}, \ldots$\linebreak$v'_{r})\in\N_{r}$, in local coordinates with respect to the basis $\B$ as in lemma \ref{base}. It yields a solution vector $\hat{\eta}\in\R^{r+r(2n-1)}$. The tangent vector $\eta\in T_y\N_{r}$ of size $r+2nr$ is given by $\eta=\B\,\hat{\eta}= (\nu,\eta_{1},\ldots,\eta_{r}, \eta'_{1}, \ldots, \eta'_{r})$.
The new point $R_y(\eta)=(\tilde{w},\tilde{v}_{1}, \ldots,\tilde{v}_{r}, \tilde{v}'_{1}, \ldots, \tilde{v}'_{r})\in\N_{r}$ is defined using the product of the retractions on each component, that is the identity map on $\R^{r}$ and the projection map on the sphere $\SS^{2n-1}$  \cite{doi:10.1137/100802529} as follows:
\begin{itemize}
\item $\tilde{w}=R_{w}(\nu)=w+\nu$;
\item $(\tilde{v}_j,\tilde{v}'_{j}) = R_{(v_{j};v'_{j})}(\eta_{j},\eta'_{j})
  =  \frac{(v_j+\eta_j;v'_j+\eta'_{j})}{||(v_j+\eta_j;v'_j+\eta'_{j})||}$.
\end{itemize}
By lemma \ref{decomposeret}, this defines a retraction from $T_y\N_r$ to $\N_r$ since $R_{w}$ (resp. $R_{(v_{j};v'_{j})}$) is a retraction on $\R^{r}$ (resp. $\SS^{2n-1}$).


\subsection{Riemannian Gauss--Newton for STA}\label{ver}

In this subsection, we consider the STA problem over the product of $r$ Veronese manifolds $\Vnd$. By separating the real and imaginary parts of the coefficients of a polynomial, the non-zero points $(v^{t} \x)^{d}$ with $v\in \C^{n}\setminus \{0\}$ form a smooth Riemannian variety in $\C[\x]_{d}$.
We equip the $\R$-vector space $\C[\x]_{d} \sim \R^{2 s_{n,d}}$ with the real inner product:
$$
\forall \bm p, \bm q \in \C[\x]_{d}, \quad \apolar{\bm p, \bm q}_{d}^{\R} = \Re(\apolar{\bm p, \bm q}_{d}).
$$

Let $\V_r:=\Vnd\times\dots\times\Vnd$.
The map $\sigma_r:y=(y_1,\dots,y_r) \in \V_r \mapsto y_1+\dots+y_r \in \C[\x]_d$ is a parameterization  of the set $\Sigma_{r}$ of symmetric tensors of symmetric rank at most $r$.
We formulate the STA problem as a Riemannian least square problem over $\V_r$ as follows:
\begin{equation*}
    \text{(STA)}_{\V_r}~~~\min_{y\in\V_r} f(y),
\end{equation*}
where $f(y)=\tfrac{1}{2}||F(y)||_d^2, \text{with}~F(y)=\sigma_r(y)- \bm p$ for $y\in \V_{r}$.

The differential map $DF= D\sigma_r$ at $y= (y_{1}, \ldots, y_{r})\in \V_{r}$ with $y_i=(v_i^t\x)^d$, $v_{i}\in \C^{n}$ is
\begin{eqnarray*}
D\sigma_r(y): T_{y_1}\Vnd\times\dots\times T_{y_r}\Vnd
& \to & T_{\sigma_r(y)}\C[\x]_d=\C[\x]_d\\
(\eta_1,\dots,\eta_r)
&\mapsto& \eta_1+\dots+\eta_r,
\end{eqnarray*}
where $T_{y_i}\Vnd=\left\{(u^t\x)(v_i^t\x)^{d-1}|~u\in\C^n\right\}$ is of dimension $2n$ over $\R$.

Recall that the Gauss--Newton equation is given by \cite[Chapter 8]{AbsMahSep2008}:
\begin{equation}\label{gnv}
    (DF(y))^*\circ(DF(y))[\eta]=-(DF(y))^*[F(y)],
\end{equation}
where $DF(y):T_y\V_r\to\C[\x]_d$ is the differential map of $F$ at $y$. The map $(DF(y))^*\circ(DF(y)):T_y\V_r\to T_y\V_r$ is the so-called  Gauss--Newton approximation of the Hessian of $f$ at $y$.

We are going to describe explicitly the matrix of this map in a convenient basis of $T_y\V_r$.
For a non-zero complex vector $v\in \C^{n}$, we define the inner product: $\forall u,u' \in \C^{n}$,
$$
\apolar{u,u'}_{v} = \Re\big(u^{*} u' + (d-1) (u^{*} v) (v^{*} u') \|v\|^{-2} \big)
$$
It is a positive definite inner product on $\C^{n}\sim\R^{2n}$ since
$\apolar{u,u}_{v}= \|u\|^{2} + (d-1)  |(v^{*} u)|^{2}  \|v\|^{-2} \ge 0$ and it vanishes iff $u=0$. Notice that $\apolar{v,v}_{v}= d \|v\|^{2}$. The symmetric matrix associated to this inner product in the canonical basis of $\R^{2n}$ is
$$
M_{v} := I_{2n} + (d-1) \|v\|^{-2} ( v_{R}^{\ } v_{R}^{t} +v_{I}^{\ } v_{I}^{t})
$$
where $v_{R}=(\Re(v);\Im(v)), v_{I}=(-\Im(v);\Re(v))$ are the vectors of $\R^{2n}$ obtained by concatenating the real and imaginary part (resp. opposite imaginary and real part) of $v\in \C^{n}$.\\
Let $u_1=\frac{v_R}{\|v\|}$ and $u_2=\frac{v_I}{\|v\|}$. We notice that $u_{1}$ and $u_{2}$ can be completed to an orthonormal basis of $\mathbb{R}^{2n}$. Let $U$ denotes the matrix of this basis i.e. $U=[u_1, \dots, u_{2n}]$. Then $UU^t=I_{2n}$, so that an eigenvalue decomposition of the symmetric matrix $M_v$ of $\apolar{\cdot, \cdot}_{v}$ in the canonical basis of $\R^{2n}$ can be written as follows:
\begin{equation}\label{EVD}
M_v=U \mathrm{diag}(1+(d-1), 1+(d-1), 1, \dots, 1) U^t=U \mathrm{diag}(d, d, 1, \dots, 1) U^t.
\end{equation}
For shortness, we denote the strictly positive diagonal matrix $\mathrm{diag}(d, d, 1, \dots, 1)$ by $S$.

\begin{lemma}\label{lem:orthonormal} Let $v \neq 0\in \C^{n}\sim \R^{2n}$ and $p = (v^{t} \x)^{d}\in \C[\x]_{d}$.
Let $u_{1}, \ldots, u_{2n} \in \C^{n}$ be an orthonormal $\R$-basis for the inner product $\apolar{\cdot,\cdot}_{v}$ with $u_{1}={v\over \sqrt{d} \|v \|}$.
Then
$$
\bm q_{i}=\sqrt{d} \|v\|^{-d+1}  (u_{i}^{t}\x) (v^{t} \x)^{d-1}, i= 1, \ldots, 2n
$$
is an orthonormal basis of $T_{p} \Vnd$ for the inner product $\apolar{\cdot,\cdot}_{d}^{\R}$.
\end{lemma}
\begin{proof}
Using the apolar identities in lemma \ref{ap}, we have
\begin{eqnarray*}
{\apolar{\bm q_{i}, \bm q_{j}}_{d}^{\R}}
&=& \sqrt{d} \|v\|^{-d+1}\Re \big(\apolar{(u_{i}^{t}\x) (v^{t} \x)^{d-1}, \bm q_{j}}_{d}\big) \\
&=&  \sqrt{d^{-1}} \|v\|^{-d+1} \Re \big((u_{i}^{*} \nabla_{\x} \bm q_{j})(\overline{v}) \big) \\
&=&  \|v\|^{-2d+2}\Re \big((u_{i}^{*} u_{j}) (v^{*}v)^{d-1} +
(d-1) (u_{i}^{*}v) (v^{*}u_{j}) (v^{*}v)^{d-2}
\big)\\
&=& \Re\big((u_{i}^{*} u_{j}) +
(d-1) ({u}_{i}^{*} v) (v^{*}u_{j}) \|v\|^{-2}\big)= \apolar{u_{i}, u_{j}}_{v}.
\end{eqnarray*}
We deduce that $\apolar{\bm q_{i}, \bm q_{j}}_{d}^{\R}=\delta_{i,j}$ and $(\bm q_{i})_{i=1, \ldots, 2n}$ is an orthonormal basis of $T_{(v^{t}\x)^{d}} \Vnd$ for the inner product $\apolar{\cdot, \cdot}_{d}^{\R}$.
\end{proof}

We describe now how to compute an orthonormal basis for $\apolar{\cdot, \cdot}_{v}$.
\begin{lemma}
Let $M_{v} = U S U^{t}$ be the eigenvalue decomposition of $M_v$ as in (\ref{EVD}).
Let $\hat{u}_{1}= \sqrt{S^{-1}} U^{t} {v_{R}\over \sqrt{d} \|v \|}$ and let $Q\in \R^{2n \times 2n}$ be the orthogonal factor of a rank-revealing QR-decomposition of
$I_{2n} - \hat{u}_{1}^{\ } \hat{u}_{1}^{t} = Q R P$
where R is upper triangular and $P$ is a permutation matrix. Let
$$
u_{R,1}={v_{R}\over \sqrt{d} \|v \|}, u_{R,i}= U \sqrt{S^{-1}} Q_{[:,i-1]} \quad i=2, \ldots, 2n.
$$
Then the orthonormal $\R$-basis $u_{1}, \ldots,u_{2n}\in \C^{n}$ for $\apolar{\cdot, \cdot}_{v}$ is such that
$u_{i}= (u_{R,i})_{[1:n]} +$ $\ib \, (u_{R,i})_{[n+1:2n]}$\linebreak$\in \C^{n}$ for $i=1, \ldots, 2n$.
\end{lemma}
\begin{proof}
As $M_{v} = U S U^{t}$ with $U U^{t} = I_{2n}$ and $S \in \R^{2n\times 2n}$ a strictly positive diagonal matrix, we have $\sqrt{S^{-1}} U^{t} M_{v } U \sqrt{S^{-1}}=I_{2n}$. Thus the column vectors of $U \sqrt{S^{-1}}$ form an orthonormal basis of $\R^{2n}$ for $\apolar{\cdot, \cdot}_{v}$.

The vector $\hat{u}_{1}= \sqrt{S^{-1}} U^{t} {v_{R}\over \sqrt{d} \|v \|}$ is representing the vector ${v_{R}\over \sqrt{d} \|v \|}$ in this orthonormal basis.

The first $2n-1$ columns of the factor $Q$ in a rank-revealing QR-decomposition of
$I_{2n} - \hat{u}_{1}^{\ } \hat{u}_{1}^{t} = {Q} {R} P$
are orthonormal vectors $\hat{u}_{2},\ldots,\hat{u}_{2n}$ for $\apolar{\cdot, \cdot}_{v}$, expressed in the basis $U \sqrt{S^{-1}}$.
An orthonormal basis $u_{R,1}, u_{R,2}, \ldots, u_{R,2n}\in \R^{2n}$ for $\apolar{\cdot, \cdot}_{v}$ is thus given by
$u_{R,1}={v_{R}\over \sqrt{d} \|v \|}, u_{R,i}= U \sqrt{S^{-1}} Q_{[:,i-1]}, i=2, \ldots, 2n$.
The corresponding vectors $\in \C^{n}$ are
$u_{i}= (u_{R,i})_{[1:n]} +$ $\ib \, (u_{R,i})_{[n+1:2n]}$\linebreak$\in \C^{n}$ for $i=1, \ldots, 2n$.
\end{proof}

Notice that when $v$ is real and $u,u'$ are real such that $\apolar{v,u}=\apolar{v,u'}=0$,   $\apolar{u, u'}_{v} = \apolar{u,u'}$ is the standard inner product of $u,u'$. Consequently in the real case, an orthonormal basis $(u_{i})_{i=1, \ldots, n}\subset \R^{n}$ can be obtained directly from
$u_{1}={v\over  \|v \|}$ and a rank-revealing QR-decomposition of $I_{n}- u_{1}^{\ } u_{1}^{t}$.

For $y=(y_1,\dots,y_r)\in \V_{r}$ with $y_i=(v_i^t\x)^d\in\Vnd$, $\forall 1\le i\le r$, let
$(\bm q_{i,j})_{j=1, \ldots, 2n}$ be the orthonormal basis associated to $v_{i}$ defined in lemma \ref{lem:orthonormal}
 and
let $Q_{i}=[\bm q_{i,1}, \ldots,$ $\bm q_{i,2n}]\in \R^{2 s_{n,d}\times 2n} $ be the coefficient matrix of the polynomials $(\bm q_{i,j})_{j=1, \ldots, 2n}$ in the canonical $\R$-basis of $\C[\x]_{d}$.
The columns of the matrix
$$
Q=\mathrm{diag}(Q_i)_{1\le i\le r},
$$
represent an orthonormal basis of $T_y\V_r$ for the inner product induced by $\apolar{\cdot, \cdot }_{d}^{\R}$ on each component.

Therefore, the Jacobian matrix $J$ of $\sigma_r$ at $y$, which is the matrix associated to $D\sigma_r(y)=DF(y)$, with respect to the orthonormal basis $Q$ on $T_y\V_r$ and the standard real basis on $\C[\x]_d$ is given by:
$$
J=[Q_1,\dots,Q_r]\in\R^{2s_{n,d}\times 2nr}.
$$

\begin{proposition}
The Gauss--Newton equation (\ref{gnv}) in the orthonormal basis $Q$ of $T_y\V_r$
is of the form
$$
H \, \tilde{\eta} = - G,
$$
where $\tilde{\eta}^t=(\tilde{\eta}_{1}^t, \ldots, \tilde{\eta}_{r}^t) \in\R^{2nr}$ is the unknown coordinate vector of an element of the tangent space $T_{y}\V_{r}$ in the basis $Q$
and
\begin{itemize}
 \item $G=[G_{k}]_{k=1, \ldots, 2nr}$ with for $1\le i\le r,~1\le j\le 2n$,\\
$
G_{2 n (i-1)+j}=\sqrt{d^{-1}} \|v_{i}\|^{-d+1}\Big(d \sum_{k=1}^{r} \Re\big((u_{i,j}^{*} v_{k})(v_{i}^{*}v_{k})^{d-1})
 -\Re\big( u_{i,j}^{*}\nabla_{\x} \bm p(\overline{v_i}) \big)\Big)
$,
 \item $H=[H_{k,k'}]_{1\le k,k'\le2nr}$ with for $1\le i,i'\le r,~1\le j,j'\le 2n$,\\
$H_{2 n (i-1)+j, 2 n (i'-1)+j' }=\|v_{i}\|^{-d+1} \|v_{i'}\|^{-d+1}\Big( \Re\big((u_{i,j}^{*}u_{i',j'}) (v_{i}^{*}v_{i'})^{d-1}\big)
   + (d-1) \Re\Big(\big(u_{i,j}^{*}v_{i'}) (v_{i}^{*}u_{i',j'})$\linebreak$  (v_{i}^{*}v_{i'})^{d-2}\big)\Big)$.

\end{itemize}
\end{proposition}
\begin{proof}
As the matrix of $D\sigma_r(y)=DF(y)$ in the orthonormal basis $Q$ on $T_y\V_r$
and the standard real basis on $\C[\x]_d$ is $J$, we have
that the Gauss--Newton equation (\ref{gnv})
is $H \tilde{\eta} = - G$ with
\begin{itemize}
 \item $G=J^t vec(\sigma_r(y)-\bm p) = (\apolar{\bm q_{i,j}, \sigma_r(y)-\bm p}_{d}^{\R})$,
 \item $H=J^tJ = [Q_{1}, \ldots, Q_{r}]^{t}[Q_{1}, \ldots, Q_{r}] =
(\apolar{\bm q_{i,j}, \bm q_{i',j'}}_{d}^{\R})$.
\end{itemize}
By the apolar identities in lemma \ref{ap}, we have
\begin{eqnarray*}
\apolar{\bm q_{i,j}, \sigma_r(y)-\bm p}_{d}^{\R} & = &
\sqrt{d^{-1}} \|v_{i}\|^{-d+1} \Big(\sum_{k=1}^{r}\Re(u_{i,j}^{*} \nabla_{\x}(v_{k}^{t} \x)^{d}(\overline{v_i}) -u_{i,j}^{*}\nabla_{\x} \bm p(\overline{v_i})) \Big)\\
& = &
\sqrt{d^{-1}} \|v_{i}\|^{-d+1}\Big(\sum_{k=1}^{r}\Re\big( d (u_{i,j}^{*} v_{k})(v_{i}^{*}v_{k})^{d-1} - u_{i,j}^{*}\nabla_{\x} \bm p(\overline{v_i}) \big)\Big).
\end{eqnarray*}
Similarly,
\begin{eqnarray*}
\lefteqn{\apolar{\bm q_{i,j}, \bm q_{i',j'}}_{d}^{\R}}\\
 & = &
 \|v_{i}\|^{-d+1} \|v_{i'}\|^{-d+1} \Re\big(u_{i,j}^{*} \nabla_{\x}((u_{i',j'}^{t}\x) (v_{i'}^{t} \x)^{d-1})(\overline{v_i})\big)\\
 & = &
 \|v_{i}\|^{-d+1} \|v_{i'}\|^{-d+1} \Re\big((u_{i,j}^{*}u_{i',j'}) (v_{i}^{*}v_{i'})^{d-1}
+ (d-1) (u_{i,j}^{*}v_{i'}) (v_{i}^{*}u_{i',j'})  (v_{i}^{*}v_{i'})^{d-2}
\big),
\end{eqnarray*}
which ends the proof of the proposition.
\end{proof}
The Gauss--Newton equation
$$
H \, \tilde{\eta} = - G,
$$
solved in local coordinate with respect to the basis $Q$, yields a vector $\tilde{\eta}=(\tilde{\eta}_{1}; \ldots; \tilde{\eta}_{r})$ $\in\R^{2nr}$.
The components of the tangent vector $\eta=(\eta_1,\dots,\eta_r)\in T_y\V_r\in \C[\x]_{d}$ are then
$$
\eta_i=\sqrt{d}||v_i||^{-d+1}(v_i^t\x)^{d-1}\sum_{k=1}^{2n}{\tilde{\eta}_{i,k}(u^t_{i,k}\x)}, \quad i=1, \ldots, r.
$$

\subsubsection{Retraction on the Veronese manifold}\label{retraction1}

We define the retraction of a tangent vector $\eta \in T_{y}\V_{r}$ to a new point $\tilde{y}$ on the manifold $\V_{r}$ as follows:
$$
\tilde{y}=(\tilde{y}_1,\dots,\tilde{y}_r)=(R_{y_1}({\eta}_1),\dots,R_{y_r}({\eta}_r)),
$$
where $R_{y_i}:T_{y_i}\V^{n,d}\to\V^{n,d}$ is a retraction operator on the Veronese manifold for $i\in\left\{1,\dots,r\right\}$ that we describe hereafter (see lemma \ref{decomposeret}). The retraction on the Veronese manifold that we are going to describe is implemented directly on $y_i+\eta_i$ in the ambient space $\C[\x]_d$ without considering any tensor compression techniques on the symmetric tensor associated to $y_i+\eta_i$, as is elaborated for instance in \cite{Kressnerl_SV_2013} for multilinear tensors.

We will use the following matrix construction to define the retraction on $\V^{n,d}$.
\begin{definition}
The Hankel matrix of degree $(k, d-k)$ associated to a polynomial $\bm p$ in $\C[\x]_d$ is given by:
   \begin{center}
   $H_{\bm p}^{k,d-k}=(\apolar{\bm p,\x^{\alpha+\beta}}_d)_{|\alpha|=k, |\beta|=d-k}$.
       \end{center}
\end{definition}
This matrix is also known as the {\em  Catalecticant matrix} of the symmetric tensor $\bm p$
in degree $(k,d-k)$ or the {\em  flattening} of $\bm p$ in degree $(k,d-k)$.
In this definition, we implicitly assume that we have chosen a monomial ordering (for instance the lexicographic ordering on the monomials indexing the rows and columns of $H_{\bm p}^{k,d-k}$) to build the Hankel matrix. The properties of Hankel matrices that we will use are independent of this ordering.
Such a matrix is called a Hankel matrix since, as in the classical case, the entries of the matrix depend on the sum of the exponents of the monomials indexing the corresponding rows and columns. 

When $k=1$, using the apolar relations $\apolar{\bm p,x_{i} \x^{\beta}}_d= \frac{1}{d}\apolar{\partial_{x_{i}}\bm p, \x^{\beta}}_{d-1}$, we see that $H_{\bm p}^{1,d-1}$ is nothing else than the transposed of the coefficient matrix of the gradient $\frac{1}{d}\nabla_{\x} \bm p$ in the basis $\left({\x^{\beta}}{ \genfrac(){0pt}{1}{d-1}{\beta}^{-1}}\right)_{ | \beta | =d-1}$. When $\bm p=(v^{t} \x)^{d}\in \V^{n,d}$, $H_{\bm p}^{1,d-1}$ can thus be written as the rank-1 matrix $v \otimes (v^{t} \x)^{d-1}$.

Our construction of a retraction on $\Vnd$ is described in the following definition.
\begin{definition}\label{Pi}
For $v\in \C^{n}\setminus \{0\}$, let $\pi_{v}:\C[\x]_d \rightarrow \Vnd$ be the map such that $\forall \bm q\in \C[\x]_d$,
\begin{equation}\label{eq:proju}
  \pi_{v}(\bm q) = \frac{ \apolar{\psi(v), \bm q}_{d}}{\| \psi(v) \|_{d}^{2}} \psi(v),
\end{equation}
where $\psi: v\in \C^{n} \mapsto (v^{t} \x)^{d}\in \Vnd$ is the parametrization of the Veronese variety.
For $\bm p\in \C[\x]_d$, let $\theta(\bm p)\in \C^{n}$ be the first left singular vector of $H_{\bm p}^{1,d-1}$.
For $\bm p\in \V^{n,d}$, let
\begin{eqnarray*}
 R_{\bm p}: T_{\bm p}\Vnd & \rightarrow & \Vnd\\
\bm q & \mapsto &  \pi_{\theta(\bm p+\bm q)}(\bm p+\bm q).
\end{eqnarray*}
\end{definition}
The retraction that we are going to describe on the Veronese manifold is closely related to the one on the Segre manifold used in \cite{Breiding2018}. In fact, since the Segre manifold coincides with the manifold of tensors of multilinear rank $(1, \ldots, 1)$, the retraction in \cite{Breiding2018} is deduced from the truncated multilinear rank $(1, \ldots, 1)$ HOSVD of a real multilinear tensor, i.e. from the truncated rank one SVD of the matricization in the different modes \cite{Kressner_SV_2013}. For a symmetric tensor, the matricization with respect to any mode gives the same Catalecticant matrix in degree (1, d-1). Hereafter, we show, by different techniques, that a single truncated SVD of the Catalecticant matrix in degree $(1,d-1)$ gives a retraction on the Veronese manifold.

By the apolar identities, we check that $R_{\bm p}(\bm q)= (\bm p(\bar{u}) + \bm q(\bar{u}))\, (u^t\x)^d$ where $u= \theta(\bm p+\bm q)$. We also verify that $\pi_{\lambda\, u}=\pi_{u}$
for any $\lambda\in \C\setminus \{0\}$ and any $u\in \C^{n} \setminus \{0\}$.

By the relation \eqref{eq:proju}, for any $v\in \C^{n}\setminus \{0\}$, $\pi_{v}(\bm q)$
is the vector on the line spanned by $\psi(v)$, which is the closest to $\bm q$ for the apolar norm.
In particular, we have $\pi_{v}(\psi(v)) = \psi(v)$.

We verify now that $R_{\bm p}$ is a retraction on $\V^{n,d}$.

\begin{lemma}\label{fixe}
Let $\bm p\in\Vnd$. Then, $\bm p$ is a fixed point by $\pi_{u}$ where $u$ is the first left singular vector of $H_{\bm p}^{1, d-1}$.
\end{lemma}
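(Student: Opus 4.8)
The plan is to exploit the rank-one structure of $H_P^{1,d-1}$ recorded just before \cref{Pi}. Since $P\in\V_n^d$, by definition $P=\Phi(v)=(v^tx)^d$ for some $v\in\C^n\setminus\{0\}$. The observation preceding \cref{Pi} then gives that $H_P^{1,d-1}$ is the rank-one matrix $v\otimes(v^tx)^{d-1}$; concretely, its rows are indexed by the variable index and its $(i,\beta)$ entry factors as $v_i$ times the coefficient data of $(v^tx)^{d-1}$, so that $H_P^{1,d-1}=v\,w^T$ for a row vector $w\neq 0$ (nonzero because $v\neq 0$, so the matrix genuinely has rank one).

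First I would identify the first left singular vector $u=\theta(P)$. Writing $H_P^{1,d-1}=v\,w^T$, I would compute $H_P^{1,d-1}\,(H_P^{1,d-1})^{*}=\|w\|^2\,v\,v^{*}$, a rank-one Hermitian matrix whose unique nonzero eigenvalue has one-dimensional eigenspace $\mathrm{span}(v)$. Since the left singular vectors of a matrix are the eigenvectors of $H_P^{1,d-1}(H_P^{1,d-1})^{*}$, the first left singular vector $u$ must be proportional to $v$, i.e.\ $u=\lambda v$ for some $\lambda\in\C\setminus\{0\}$.

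Next I would invoke the two elementary properties stated immediately after \cref{Pi}: the scaling invariance $\Pi_{\lambda u}=\Pi_{u}$ for $\lambda\neq 0$, and the fact that $\Pi_{v}(\Phi(v))=\Phi(v)$ (which itself follows from \eqref{eq:proju} together with \cref{ap}). Combining these, $\Pi_{u}=\Pi_{\lambda v}=\Pi_{v}$, and therefore $\Pi_{u}(P)=\Pi_{v}(\Phi(v))=\Phi(v)=P$, which is exactly the assertion that $P$ is fixed by $\Pi_u$.

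The only delicate point I anticipate is the bookkeeping of complex conjugation in the singular value decomposition, namely ensuring that the left singular vector of the outer product $v\otimes(v^tx)^{d-1}$ lies along $v$ rather than along $\bar v$. This is what makes passing through $H_P^{1,d-1}(H_P^{1,d-1})^{*}$ worthwhile: its column space is $\mathrm{span}(v)$ independently of any conjugation convention, so the identification $u=\lambda v$ is unambiguous. Everything after that is a direct substitution using the already-established properties of $\Pi_{v}$, so no further computation is needed.
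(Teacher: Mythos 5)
Your argument is correct and follows essentially the same route as the paper's own proof: identify the first left singular vector $u$ of $H_P^{1,d-1}$ as a nonzero multiple of $v$ using the rank-one factorization noted before \cref{Pi}, then conclude via the scale invariance $\Pi_{\lambda v}=\Pi_v$ and the identity $\Pi_v(\Phi(v))=\Phi(v)$. The only difference is that you explicitly justify the proportionality $u=\lambda v$ by passing through $H_P^{1,d-1}\bigl(H_P^{1,d-1}\bigr)^*$, a step the paper simply asserts.
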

\begin{proof}
If $\bm p=(v^{t} \x)^{d}=\psi(v) \in \V^{n,d}$ with $v\in \C^{n}\setminus \{0\}$, then the first left singular vector $u$ of $H_{\bm p}^{1, d-1}$ is up to a scalar equal to $v$. Thus we have $\pi_{u}(\bm p)=\pi_{v}(\psi(v)) = \psi(v) = \bm p$.
\end{proof}

\begin{proposition}\label{smooth}
Let $\bm p\in\Vnd$. There exists a neighborhood $\U_{\bm p}\subset\C[\x]_d$ of $\bm p$ such that the map $\rho: \bm q \in \U_{\bm p} \mapsto \pi_{\theta(\bm q)}(\bm q)$ is well-defined and $C^\infty$ smooth.
\end{proposition}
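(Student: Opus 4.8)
The plan is to read $\rho$ as the composition $Q \mapsto H_{Q}^{1,d-1} \mapsto \theta(Q) \mapsto \Pi_{\theta(Q)}(Q)$ and to isolate the only delicate link, the passage from the Hankel matrix to its first left singular vector $\theta(Q)$. Singular vectors are defined only up to a unimodular scalar and are not, in general, single-valued smooth functions of the matrix, so one cannot differentiate $\theta$ directly. My strategy is to show that $\rho$ depends on $\theta(Q)$ \emph{only through the orthogonal projector onto the dominant left singular subspace}, and that this projector is a genuinely smooth function of $Q$ near $P$; smoothness of $\rho$ then follows by composition. This factorisation through the phase-invariant projector is where I expect the main difficulty to sit.

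First I would record the easy links. The assignment $Q \mapsto H_{Q}^{1,d-1}$ is $\C$-linear by construction of the Hankel matrix, hence $C^\infty$. At the base point $P=\Phi(v)\in\V_n^d$, the remark preceding \cref{Pi} gives $H_{P}^{1,d-1}=v\otimes(v^tx)^{d-1}$, a rank-one matrix with $v\neq 0$; consequently its largest singular value $\sigma_1$ is strictly positive and \emph{simple} (all other singular values vanish). Since singular values depend continuously on the matrix, I can fix a neighborhood $\U_{P}\subset\hp$ of $P$ on which $H_{Q}^{1,d-1}$ still has a simple dominant singular value $\sigma_1(Q)>\sigma_2(Q)\ge 0$. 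On $\U_{P}$ the vector $\theta(Q)$ is a well-defined unit vector (up to phase), so $\Phi(\theta(Q))\neq 0$ and $||\Phi(\theta(Q))||_d=||\theta(Q)||^d=1$; in particular the denominator in \cref{eq:proju} does not vanish, and $\rho$ is well-defined on $\U_{P}$.

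The core step is smoothness of the projector $\pi(Q):=\theta(Q)\theta(Q)^{*}$ onto the dominant left singular subspace. Although $\theta$ admits no continuous single-valued choice in general, $\pi(Q)$ coincides with the Riesz spectral projector of the Hermitian matrix $H_{Q}^{1,d-1}\,(H_{Q}^{1,d-1})^{*}$ attached to the simple, isolated eigenvalue $\sigma_1(Q)^2$. Classical analytic perturbation theory (expressing the projector as a contour integral of the resolvent around $\sigma_1(Q)^2$, using the spectral gap that persists on $\U_{P}$) then shows that $Q\mapsto\pi(Q)$ is $C^\infty$, indeed real-analytic, on $\U_{P}$.

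It remains to express $\rho$ smoothly through $\pi(Q)$. Using \cref{ap}, for a unit vector $u=\theta(Q)$ one has $\Pi_{u}(Q)=\apolar{\Phi(u),Q}_d\,\Phi(u)=Q(\bar u)\,(u^tx)^d$. Identifying $\hp$ with $\ts$, writing $\pi(Q)_{k,l}=u_k\bar u_l$, and letting $T_{Q}$ denote the symmetric tensor attached to $Q$, a direct index computation shows that applying $\pi(Q)$ slot-wise to $T_{Q}$ produces exactly $Q(\bar u)\,u^{\otimes d}$, i.e. the tensor of $\rho(Q)$; thus $\rho(Q)=\pi(Q)^{\otimes d}\!\cdot T_{Q}$. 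This expression is polynomial in the entries of $\pi(Q)$ and linear in $Q$, and is manifestly invariant under $u\mapsto e^{i\phi}u$, which confirms that $\rho$ sees $\theta(Q)$ only through $\pi(Q)$. Composing the smooth maps $Q\mapsto\pi(Q)$ and $Q\mapsto T_{Q}$ then yields that $\rho$ is $C^\infty$ on $\U_{P}$. The phase non-uniqueness of $\theta$, the anticipated obstacle, is thereby dissolved by passing to the smooth spectral projector, and verifying the rewriting $\rho=\pi^{\otimes d}\!\cdot T_Q$ is the accompanying computation.
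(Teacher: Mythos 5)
Your proof is correct, and it reaches the conclusion by a genuinely different route at the one delicate point. The paper's proof shares your outer skeleton --- $Q\mapsto H_{Q}^{1,d-1}$ is linear, $H_{P}^{1,d-1}$ has rank one so the dominant singular value is simple, and the gap persists on a neighborhood --- but it then simply asserts, citing a reference on smoothness of matrix decompositions, that the singular-vector map $\theta$ itself is well-defined and smooth on the open set where the gap is positive, and concludes by writing $\rho(Q)=\apolar{Q,\gamma(Q)}_d\,\gamma(Q)$ with $\gamma=\Phi\circ\theta$ and composing smooth maps. You instead refuse to differentiate $\theta$ at all, on the legitimate ground that a left singular vector is only determined up to a unimodular phase, and you route the argument through the phase-invariant spectral projector $\pi(Q)=\theta(Q)\theta(Q)^{*}$ of the Hermitian matrix $H_{Q}^{1,d-1}(H_{Q}^{1,d-1})^{*}$, whose smoothness follows from the Riesz contour-integral representation and the persistent gap. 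Your identity $\rho(Q)=\pi(Q)^{\otimes d}\cdot T_{Q}$ checks out: slot-wise application of $uu^{*}$ to $T_{Q}$ gives $(T_{Q}\cdot\bar{u}^{\otimes d})\,u^{\otimes d}=Q(\bar{u})\,u^{\otimes d}$, which matches $\Pi_{u}(Q)$ from \cref{eq:proju} and \cref{ap} since $\|\Phi(u)\|_{d}=\|u\|^{d}=1$. What your approach buys is self-containedness and an explicit demonstration that $\rho$ factors through a single-valued smooth object, dissolving the phase ambiguity that the paper's citation quietly absorbs (the paper does observe separately that $\Pi_{\lambda u}=\Pi_{u}$, which is the same invariance you exploit); what the paper's approach buys is brevity, since once one accepts that a smooth local choice of $\theta$ exists, the composition is immediate. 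The only point worth tightening in your write-up is to fix a single contour around $\sigma_{1}(P)^{2}$ that encloses $\sigma_{1}(Q)^{2}$ and excludes the rest of the spectrum uniformly for $Q$ in a possibly shrunken $\U_{P}$, so that the resolvent integral visibly depends smoothly on $Q$.
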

\begin{proof}
Let $\bm p\in\Vnd$ and  $\theta:\bm q\in \hp\rightarrow q\in \C^n$ where $q$ is the first left singular vector of the SVD decomposition of $H_{\bm q}^{1,d-1}$. Let $\gamma:\C[\x]_d\to\Vnd = \psi \circ \theta$ be the composition map by the parametrization map $\psi$ of $\V^{n,d}$.

By construction, we have $\rho: \bm q\mapsto \apolar{\bm q, \gamma(\bm q)}_d\, \gamma(\bm q)$.
Let $\O$ denotes the open set of homogeneous polynomials $\bm q\in \C[\x]_d$ such that the Hankel matrix $H_{\bm q}^{1,d-1}$ has a nonzero gap between the first and the second singular values. It follows from \cite{Chern2000SmoothnessAP} that the map $\theta$ is well-defined and smooth on $\O$. As $\bm p$ is in $\Vnd$ and $H_{\bm p}^{1,d-1}$ is of rank $1$, $\bm p\in \O$. Let $\U_{\bm p}$ be a neighborhood of $\bm p$ in $\C[\x]_d$  such that $\psi_{\vert\U_{\bm p}}$ is well-defined and smooth.
As the apolar product $\langle\cdot, \cdot\rangle_d$ and the multiplication are well-defined and smooth on $\C[\x]_d\times\C[\x]_d$, $\rho$ is well-defined and smooth on $\U_{\bm p}$, which ends the proof.
\end{proof}

As $\psi: v\in \C^{n} \mapsto (v^{t} \x)^{d}\in \Vnd$ is a parametrization of the Veronese variety $\Vnd$,
the tangent space of $\V^{n,d}$ at a point $\psi({v})$ is spanned by the first order vectors $D\psi(v)\, q$ of the Taylor expansion of
$\psi(v+ t\, q) = \psi(v)+ t\, D\psi({v})\, q + O(t^{2})$ for $q\in \C^{n}$.
We are going to use this observation to prove the rigidity property of $R_{p}$.
\begin{proposition}\label{prop:rigidity}
For $\bm p\in \V^{n,d}, \bm q\in T_{\bm p}(\V^{n,d})$,
$$
  \bm p+t\, \bm q -  R_{\bm p}(t\,\bm q) = O(t^{2}).
$$
\end{proposition}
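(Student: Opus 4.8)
The plan is to recognize that this statement is precisely the local rigidity condition $DR_P(0_P)=\mathrm{id}_{T_P\V_n^d}$ in disguise. Writing the first order Taylor expansion $R_P(tQ)=R_P(0_P)+t\,DR_P(0_P)[Q]+O(t^2)$, the claim $P+tQ-R_P(tQ)=O(t^2)$ is equivalent to the two facts $R_P(0_P)=P$ and $DR_P(0_P)[Q]=Q$ for every $Q\in T_P\V_n^d$. The first is immediate, since $R_P(0_P)=\Pi_{\theta(P)}(P)=P$ is exactly \cref{fixe}. Hence the whole proof reduces to showing that the differential of the retraction at the origin acts as the identity on the tangent space.

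To carry this out cleanly I would work with the map $\rho$ of \cref{smooth}, using that $R_P(Q)=\rho(P+Q)$ and that $\rho$ is $C^\infty$ on a neighborhood $\U_P\subset\hp$ of $P$. The key observation is that $\rho$ fixes the whole Veronese manifold pointwise near $P$: for every $w\in\C^n\setminus\{0\}$ with $\Phi(w)\in\U_P$, the Hankel matrix $H_{\Phi(w)}^{1,d-1}=w\otimes(w^tx)^{d-1}$ is rank one with left singular vector proportional to $w$, so $\theta(\Phi(w))\propto w$ and therefore $\rho(\Phi(w))=\Pi_w(\Phi(w))=\Phi(w)$, using the scaling invariance $\Pi_{\lambda u}=\Pi_u$ and the identity $\Pi_w(\Phi(w))=\Phi(w)$ recorded after \cref{Pi}. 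This is simply \cref{fixe} applied at each nearby point of $\V_n^d$.

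Thus $\rho\circ\Phi=\Phi$ holds on a neighborhood of $v$ in $\C^n$, where $P=\Phi(v)$. Differentiating this identity at $v$ in an arbitrary direction $q\in\C^n$ gives $D\rho(P)\,[D\Phi(v)\,q]=D\Phi(v)\,q$. Since, by the observation preceding the statement, every tangent vector $Q\in T_P\V_n^d$ has the form $Q=D\Phi(v)\,q$, this yields $D\rho(P)[Q]=Q$, that is $DR_P(0_P)[Q]=Q$. Combining this with $R_P(0_P)=P$ and Taylor expanding $R_P(tQ)=\rho(P+tQ)$ to first order gives $R_P(tQ)=P+tQ+O(t^2)$, which is the claim.

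The main obstacle is not the differentiation step, which is routine once the fixed point identity is established, but rather justifying that $\rho\circ\Phi=\Phi$ holds on a \emph{full} open neighborhood of $v$ and that $\rho$ is differentiable there, so that the chain rule applies. This rests on the smoothness of the singular-vector map $\theta$ from \cref{smooth} (equivalently the nonzero gap between the first and second singular values of $H_Q^{1,d-1}$ on the open set $\O$, via \cite{Chern2000SmoothnessAP}), together with the continuity of $\Phi$ ensuring $\Phi(w)\in\U_P$ for $w$ close to $v$. I would take care to track the scaling ambiguity of the singular vector, since $\theta$ is only defined up to a unit scalar; this causes no difficulty because $\Pi_u$ depends only on the line $\C u$.
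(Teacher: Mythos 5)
Your proof is correct, but it follows a genuinely different route from the paper's. You reduce the claim to the two identities $R_P(0_P)=P$ and $DR_P(0_P)=\mathrm{id}_{T_P\V_n^d}$, and you obtain the second by differentiating the pointwise fixed-point identity $\rho\circ\Phi=\Phi$ (valid on all of $\C^n\setminus\{0\}$ by \cref{fixe}) via the chain rule, leaning on the $C^\infty$ smoothness of $\rho$ established in \cref{smooth} and on the fact that $T_P\V_n^d$ is the image of $D\Phi(v)$. The paper instead argues by a direct asymptotic estimate: it compares $P+tQ$ with the genuine curve $\Phi(v+tq)$ on the manifold, notes that the two Hankel matrices differ by $O(t^2)$, invokes continuity (in effect, local Lipschitz dependence, which requires the same singular-value gap you mention) of the first left singular vector to get $\theta(P+tQ)-\theta(\Phi(v+tq))=O(t^2)$, and then chains the projections $\Pi_{u_t}$, $\Pi_{v_t}=\Pi_{v+tq}$ to conclude. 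Your version is arguably cleaner and makes explicit that the proposition is exactly the rigidity condition of \cref{conditions}; it also reuses \cref{smooth}, which the paper proves anyway for property 2 of the retraction. The paper's version keeps the estimate elementary and quantitative without formally invoking differentiability of $\theta$, at the cost of a slightly informal appeal to ``continuity of the SVD'' where a Lipschitz or smoothness statement is really what is used. Your closing remarks about the phase ambiguity of $\theta$ being absorbed by $\Pi_{\lambda u}=\Pi_u$ are exactly the right point to flag, and both arguments rest on the same underlying fact that $H_{P}^{1,d-1}$ has a nonzero gap between its first two singular values at points of $\V_n^d$.
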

\begin{proof}
As $\bm p\in \V^{n,d}, \bm q\in T_{\bm p}\V^{n,d}$,
there exist $v,q\in \C^{n}$ such that $\bm p=\psi(v), \bm q = D\psi(v)\, q$. In particular, we have
$\bm p+t\, \bm q -\psi(v+t\, q) = O(t^{2})$. This implies that $H_{\bm p+t\, \bm q}^{1, d-1} -H_{\psi(v+t\, q)}^{1,d-1}= O(t^{2})$.
By differentiability of simple non-zero singular values and their singular vectors \cite{Stewart2001MatrixAV}, we have $u_{t} - v_{t} = O(t^{2})$
where $u_{t}=\theta({\bm p+t\, \bm q})$ and $v_{t}=\theta({\psi(v+t\, q)})$ are respectively the first left singular vectors of
$H_{\bm p+t\, \bm q}^{1, d-1}$ and $H_{\psi(v+t\, q)}^{1,d-1}$.

Since $H_{\psi(v+t\, q)}^{1,d-1}$ is a matrix of rank $1$ and its image is spanned by $v+t\,q$, $v_{t}$ is a non-zero scalar multiple of $v+t\, q$ and we have $\pi_{v_{t}}= \pi_{v+t\, q}$.
By continuity of the projection on a line, we have
$$
\pi_{u_{t}}(\bm p+t\,\bm q) = \pi_{v_{t}}(\bm p+t\,\bm q) + O(t^{2})  =  \pi_{v+t\, q}(\bm p+t\,\bm q) + O(t^{2}).
$$
Since $\psi(v+t\,q)=\psi(v) + t\, D\psi(v) q +O(t^{2})= \bm p+t\,\bm q +O(t^{2})$, we have
$$
\pi_{v+t\, q}(\bm p+t\,\bm q)  = \pi_{v+t\,q}(\psi(v+t\, q)) + O(t^{2})
= \psi(v+t\, q) + O(t^{2}).
$$
We deduce that
\begin{eqnarray*}
\bm p+t\, \bm q -  R_{\bm p}(t\,\bm q) &=& \bm p+t\, \bm q -  \pi_{u_{t}}(\bm p+t\,\bm q) \\
& = & \bm p+t\,\bm q - \psi(v+t\, q) + \left(\psi(v+t\, q)-\pi_{v_{t}}(\bm p+t\,\bm q)\right)\\&& + \left(\pi_{v_{t}}(\bm p+t\,\bm q) - \pi_{u_{t}}(\bm p+t\,\bm q)\right)\\
& = & \psi(v) + t\, D\psi(v) q - \psi(v+t\, q) + O(t^{2}) = O(t^{2}),
\end{eqnarray*}
which proves the proposition.
\end{proof}

\begin{proposition}\label{retraction}
  Let $\bm p\in\Vnd$.
  The map $R_\bm p:T_\bm p\Vnd\rightarrow \Vnd$,~~$\bm q\mapsto R_{\bm p}(\bm q)=\pi_{\theta(\bm p+\bm q)}(\bm p+\bm q)$ is a retraction operator on the Veronese manifold $\Vnd$.
\end{proposition}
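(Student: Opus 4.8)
The plan is to verify directly the three defining properties of a retraction listed in \cref{conditions}, each of which has been prepared by one of the three preceding results; so the proof amounts to assembling \cref{fixe}, \cref{smooth}, and \cref{prop:rigidity} rather than to any fresh computation. First I would check property 1, the consistency at the origin: setting $Q=0_P$ in the definition of $R_P$ gives $R_P(0_P)=\Pi_{\theta(P)}(P)$, and since $P\in\V_n^d$, \cref{fixe} shows that $P$ is fixed by $\Pi_{\theta(P)}$, whence $R_P(0_P)=P$.

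Next I would establish property 2, smoothness on a neighborhood, by writing $R_P(Q)=\rho(P+Q)$, where $\rho:Q\mapsto\Pi_{\theta(Q)}(Q)$ is the map of \cref{smooth}. The idea is that $R_P$ is the composition of the affine translation $Q\mapsto P+Q$, which sends a neighborhood of $0_P$ in $T_P\V_n^d$ into a neighborhood of $P$ in $\hp$, with $\rho$. Since \cref{smooth} guarantees that $\rho$ is well-defined and $C^\infty$ on a neighborhood $\U_P$ of $P$ in $\hp$, the preimage of $\U_P$ under this translation, intersected with $T_P\V_n^d$, provides the open neighborhood of $0_P$ on which $R_P$ is well-defined and smooth.

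Finally I would treat property 3, the local rigidity condition $DR_P(0_P)=\mathrm{id}_{T_P\V_n^d}$. For a fixed $Q\in T_P\V_n^d$, the differential $DR_P(0_P)[Q]$ is the velocity at $t=0$ of the curve $t\mapsto R_P(tQ)$. From \cref{prop:rigidity} we have $R_P(tQ)=P+tQ+O(t^2)$, so differentiating at $t=0$ reads off the linear term and yields $DR_P(0_P)[Q]=Q$; as $Q$ is arbitrary, $DR_P(0_P)=\mathrm{id}_{T_P\V_n^d}$.

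Since the heavy lifting has already been carried out in the preceding propositions, no step here poses a genuine obstacle. The only point requiring care is the passage from the second-order estimate of \cref{prop:rigidity} to the statement about the differential: I must make sure that the $O(t^2)$ remainder is read correctly as a higher-order term so that it vanishes upon differentiation at $t=0$, and that the surviving linear term is interpreted as the identity on the tangent space $T_P\V_n^d$ and not merely as a map on the ambient space $\hp$.
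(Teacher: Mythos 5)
Your proposal is correct and follows essentially the same route as the paper: both verify the three properties of \cref{conditions} by invoking \cref{fixe} for consistency at the origin, factoring $R_P$ as $\rho$ composed with the translation $Q\mapsto P+Q$ and applying \cref{smooth} for local smoothness, and reading the local rigidity off the $O(t^{2})$ estimate of \cref{prop:rigidity}. No differences worth noting.
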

\begin{proof}
We have to prove that $R_{\bm p}$ verifies the three properties in \cref{conditions}.
\begin{enumerate}
    \item $R_{\bm p}(0_{\bm p})=\pi_{\theta(\bm p)}(\bm p+0_{\bm p})=\pi_{\theta(\bm p)}(\bm p)=\bm p$, by using lemma \ref{fixe}.
    \item Let $S_{\bm p}:T_{\bm p}\Vnd\rightarrow\C[\x]_d$,~$\bm q\mapsto \bm p+\bm q$.
      The map $S_{\bm p}$ is well-defined and smooth on $T_{\bm p}\Vnd$.
By proposition \ref{smooth}, $\pi$ is well-defined and smooth in a neighborhood $\U_{\bm p}$ of $\bm p\in\Vnd$. Thus $R_{\bm p}= \rho \circ S_{\bm p}$ is well-defined and smooth in a neighborhood $\U'_{\bm p} \subset T\Vnd$ of $0_{\bm p}$.
\item By proposition \ref{prop:rigidity},
$$
(\bm p+t\bm q)-R_{\bm p}(t\,\bm q) = O(t^2),
$$
which implies that $\frac{d}{dt}R_{\bm p}(t\,\bm q)\mid_{t=0}=\bm q$, or equivalently $DR_{\bm p}(0_{\bm p}) \bm q=\bm q$. Therefore we have $DR_{\bm p}(0_{\bm p})=id_{T_{\bm p}\Vnd}$.
\end{enumerate}
\end{proof}

\subsection{Adding a trust-region scheme (c.f. \cite[Chapter 7]{AbsMahSep2008})}\label{trust}
Recall that Riemannian Newton (resp. Gauss--Newton) is looking for a critical point of a real-valued function $f$, without distinguishing between local minimizer, saddle point and local maximizer. Furthermore, the convergence of this algorithm may not occur from the beginning. For these reasons, a trust region scheme is usually added to such algorithm in order to enhance the algorithm, with the desirable properties of convergence to a local minimum, with a local superlinear rate of convergence. In fact, trust region method ensures that $f$ decreases at each iteration, which reinforces, when convergence occurs, the possibility of finding a local minimizer. Nevertheless, a global convergence to a local minimizer from any initial points is not guaranteed even after adding the trust region scheme (see \cite[Subsection 7.4.1]{AbsMahSep2008} for the global convergence of Riemannian trust-region methods). We prove in proposition \ref{conv_RNETR} that under regularity assumptions, a local convergence for the Riemannian--Newton algorithm with trust region scheme can be obtained. Studying global or further local convergence properties for the Riemannian Newton (resp. Gauss Newton) method with trust region scheme for the STA problem is beyond the scope of this article.

Let $\M$ denote the Riemannian manifold $\N_r$ in subsection \ref{formulation} (resp. $\V_r$ in subsection \ref{ver}), and let $y_k\in\M$. The idea is to approximate the objective function $f$ to its second order Taylor series expansion in a ball of center $0_{y_k}\in T_{y_k}\M$ and radius $\Delta_k$ denoted by $B_{\Delta_k}:=\left\{\eta\in T_{y_k}\M\mid||\eta||\le\Delta_k\right\}$, and to solve the subproblem
\begin{equation}\label{sub}
   \min_{\eta\in B_{\Delta_k}}m_{y_k}(\eta),
\end{equation}
where $m_{y_k}(\eta):=f(y_k)+G_k^t\eta+\frac{1}{2}\eta^t H_k \eta$, $G_{k}$ is the gradient of $f$ at $y_{k}$ and $H_{k}$ is respectively the Hessian of $f$ at $y_{k}$ for the Riemannian Newton method and the Gauss--Newton approximation of $f$ at $y_k$ for the Riemannian Gauss--Newton method.

By solving \eqref{sub}, we obtain a solution $\eta_k\in T_{y_k}\M$. Accepting or rejecting the candidate new point $y_{k+1}=R_{y_k}(\eta_k)$ is based on the quotient $\rho_k=\frac{f(y_k)-f(y_{k+1})}{m_{y_k}(0)-m_{y_k}(\eta_k)}$.\\
If $\rho_k$ exceeds 0.2 then the current point $y_k$ is updated, otherwise the current point $y_k$ remains unchanged.\\
The radius of the trust region $\Delta_k$ is also updated based on $\rho_k$. We choose to update the trust region as in \cite{Breiding2018} with a few changes.

Let $\Delta_{y_0}:=10^{-1}\sqrt{\frac{d}{r}\sum_{i=1}^{r}{||w_i^0||^2}}$ in the Riemannian Newton iteration (resp. $\Delta_{y_0}:=10^{-1}$\linebreak$\sqrt{\frac{d}{r}\sum_{i=1}^{r}{||v_i^0||^{2d}}}$ in the Riemannian Gauss--Newton iteration), $\Delta_{\mathrm{max}}:=\frac{1}{2}||\bm p||_d$.
We take the initial radius as $\Delta_0=\mathrm{min}\{\Delta_{y_0},\Delta_{\mathrm{max}}\}$, if $\rho_k>0.6$ then the trust region is enlarged as follows: $\Delta_{k+1}=\mathrm{min}\{2||\eta_k||,\Delta_{\mathrm{max}}\}$. Otherwise the trust region is shrinked by taking $\Delta_{k+1}=\mathrm{min}\{(\frac{1}{3}+\frac{2}{3}(1+e^{-14(\rho_k-\frac{1}{3})})^{-1})\Delta_k, \Delta_{\mathrm{max}}\}$.

We choose the so-called dogleg method to solve the subproblem \eqref{sub} \cite{GVK502988711}.
Let $\eta_N$ be the Newton direction given by $H\eta_N=-G$, let $\eta_c$ denote the Cauchy point given by $\eta_c=-\frac{G^tG}{G^tHG}G$,
and let $\eta_I$ be the intersection of the boundary of the sphere $B_\Delta$ and the vector pointing from $\eta_c$ to $\eta_N$.
Then the optimal solution $\eta^*$ of \eqref{sub} by the dogleg method is given as follows:
\begin{equation*}
\eta^*=\begin{cases} \eta_N & \text{if}~ ||\eta_N||\le\Delta\text{,}\\
-\frac{\Delta}{||G||}G & \text{if}~ ||\eta_N||>\Delta ~\text{and}~ ||\eta_c||\ge\Delta\text{,}\\
\eta_I & \text{otherwise.}\end{cases}\end{equation*}

The algorithm of the Riemannian Newton (resp. Gauss--Newton) method with trust region scheme for the STA problem is denoted by RNE-N-TR (resp. RGN-V-TR) and is given in pseudo-code by \cref{alg:rim2}.

\begin{algorithm}[ht]
\caption{Riemannian Newton (resp. Gauss--Newton) algorithm with trust region sheme for the STA problem ``RNE-N-TR''(resp. ``RGN-V-TR'')}
\label{alg:rim2}
\begin{algorithmic}
\STATE{\textbf{Input:} The homogeneous polynomial $\bm p\in\C[\x]_d$ associated to the symmetric tensor to approximate, $r<r_g$}.\\
\STATE{\textbf{Choose} initial point $y_{0}\in\N_r$ (resp. $y_{0}\in\V_r$).}
\WHILE{the method has not converged}
\STATE{\scriptsize{
1.}~\normalsize Compute the gradient vector and the Hessian matrix (resp. Gauss--Newton Hessian approximation);}
\STATE{\scriptsize{2.}~\normalsize Solve the subproblem \eqref{sub} for the search direction $\eta_k\in B_{\Delta_k}$ by using the dogleg method;}
\STATE{\scriptsize{3.}~\normalsize Compute the candidate next new point $y_{k+1}=R_{y_k}(\eta_k)$; }
\STATE{\scriptsize{4.}~\normalsize Compute the quotient $\rho_k$; }
\STATE{\scriptsize{5.}~\normalsize Accept or reject $y_{k+1}$ based on the quotient $\rho_k$;}
\STATE{\scriptsize{6.}~\normalsize Update the trust region radius $\Delta_{k}$.}
\ENDWHILE
\STATE{\textbf{Output:} $y_{*} \in \N_{r}$ (resp. $y_{*}\in \V_{r}$).}

\end{algorithmic}
\end{algorithm}
The \cref{alg:rim2} is stopped when $\Delta_k\le\Delta_{\mathrm{min}}$ (by default $\Delta_{\mathrm{min}}=10^{-3}$), or when the maximum number of iterations exceeds $N_{\mathrm{max}}$.
\begin{remark}\rm
  In order to handle ill-conditioned Hessian (resp. Gauss--Newton Hessian approximation) matrices in \cref{alg:rim2}, we use the Moore-Penrose pseudoinverse  \cite{doi:10.1137/1.9781611971484,1585,doi:10.1137/0905030}. This can appear in cases where some vectors $v_{i}$ of the rank-$r$ approximation span close lines, which yields a singularity problem in the iteration. In particular, this is the case when the symmetric border rank of the symmetric tensor is not equal to its symmetric rank \cite{Breiding2018,Comon2008}, \cite[section 2.4]{landsberg2011tensors}. For example, the tensor $\bm p=(v_0^t\x)(v_1^t\x)^{d-1} + \epsilon\, T$, with $v_0, v_1\in\R^n$, $T\in \R[\x]_d$ and $\epsilon$ very small, is close to the tensor $(v_0^t\x)(v_1^t\x)^{d-1}= \lim_{\delta \rightarrow 0}\frac{1}{d\, \delta}(((v_1+\delta v_0)^{t} \x)^{d} - (v_{1}^{t}\x)^{d})$ of border rank 2 and symmetric rank $d$. It can be very well approximated by a tensor of rank $2$, with two vectors of almost the same direction.
\end{remark}

Under some regularity assumption, it is possible to guarantee  that RNE-N-TR algorithm converges to a local minimum of the distance function $f$.
\begin{proposition}\label{conv_RNETR} Let $\bm p\in\hp$, let $\bm p_{0} \in \Sigma_{r}$ be the initial point of RNE-N-TR and let $B_{0}=B(\bm p,||\bm p-\bm p_{0}||_{d})$ be the ball of center $\bm p$ and radius $||\bm p-\bm p_{0}||_{d}$ in $\hp$. Assume that $B_{0}\cap \sec_{r} \subset \Sigma_{r}^{\mathrm{reg}}$ (i.e. all points of $\sec_{r}$ in $B_{0}$ are non-defective), then RNE-N-TR converges to a local minimum $y\in \N_r$ of the distance function $f$ to $\Sigma_{r}$.
\end{proposition}
\begin{proof}
  Let $\Sigma_{r}^{0}:= B_{0}\cap \sec_{r} = B_{0}\cap \Sigma_{r}^{\mathrm{reg}}$ be the set of non-defective tensors of rank $r$ in $B_{0}$.
  As $\varphi_{r}: (W,V_{R},V_{I})\in \N_{r} \mapsto \sum_{i=1}^{r} w_{i} ((v_{R,i}+\ib\, v_{I,i})^{t}\x)^{d}\subset \sec_{r}\subset \C[\x]_{d}$ is locally injective at a non-defective tensor, it defines a local diffeomorphism between $\N_{r, 0}=\varphi_{r}^{-1}(\Sigma_{r}^{0})$ and  $\Sigma_{r}^{0}$. As $B_{0}$ is compact, $\N_{r, 0}=\varphi_{r}^{-1}(\Sigma_{r}^{0})$ is a compact Riemannian manifold. By construction, the distance between $\bm p$ and the iterates $\bm p_{i}$ is decreasing  in RNE-N-TR, so that their decomposition is in $\N_{r, 0}=\varphi_{r}^{-1}(\Sigma_{r}^{\mathrm{reg}}\cap B_{0})$.
As $\N_{r, 0}$ is a compact Riemannian manifold and $f$ is smooth on $\N_{r,0}$ (as a polynomial function), \cite[Corollary 7.4.6]{AbsMahSep2008} implies that the iterates of RNE-N-TR of Riemannian Newton method with a trust region sheme on $\N_{r, 0}$ converge to a local minimum of the distance function $f$.
\end{proof}
The regularity assumption $B_{0}\cap \sec_{r} \subset \Sigma_{r}^{\mathrm{reg}}$
implies that the ball centered at $\bm p$ and containing the initial point of the iteration does not contain a defective tensor. In this case, the iterates, which distance to $\bm p$ decreases, remain in the ball and the limit decomposition is a non-defective low rank tensor. This assumption, satisfied
when $\bm p$ is far enough from the singular locus of $\sec_{r}$, is a sufficient condition to ensure the regularity of the iteration points and their limit.

\section{Numerical experiments}\label{num}
In this section, we present four numerical experiments using the RNE-N-TR and RGN-V-TR algorithms. These algorithms are implemented in the package \texttt{TensorDec.jl}\footnote{It can be obtained from \url{https://gitlab.inria.fr/AlgebraicGeometricModeling/TensorDec.jl} and run in Julia version 1.1.1.
  See functions rne\_n\_tr and rgn\_v\_tr.}. We use a Julia implementation for the method SPM tested in subsection \ref{first}.
The solvers from Tensorlab v3 \cite{tensorlab3.0} are run in MATLAB 7.10.
The experimentation was done on a Dell Windows desktop with 8 GB memory and Intel Core i5-5300U, 2.3 GHz CPU.

\subsection{Choice of the initial point}\label{initial}
The choice of the initial point is a crucial step in iterative methods.
%
We use the direct algorithm of \cite{harmouch:hal-01440063}, based on the computation of generalized eigenvectors and generalized eigenvalues of pencils of Hankel matrices (see also \cite{mourrain:hal-01367730}), to compute an initial rank-$r$ approximation. This algorithm, denoted SMD, works only with $r<r_g$ such that $\iota\le\lfloor\frac{d-1}{2}\rfloor$ where $\iota$ denotes the interpolation degree of the points in the rank-$r$ decomposition \cite[Chapter 4]{eisenbud_geometry_2005}. This implies that $r<\binom{n+d'-1}{d'}$ where $d'=\lfloor\frac{d-1}{2}\rfloor$.
It first computes a SVD decomposition of the Hankel matrix of the tensor $\bm t$ in degree $\big(\lfloor\frac{d-1}{2}\rfloor, d-\lfloor\frac{d-1}{2}\rfloor\big)$, extracts the first $r$ singular vectors, computes a simultaneous diagonalisation of the matrices of multiplication by the variables $x_{i}$ by taking a random combination of them, computing its eigenvectors and deducing the points and weights in the approximate decomposition of $\bm t$.
The rationale behind choosing the initial point with this method is when the symmetric tensor is already of symmetric rank $r$ with $r<r_g$ and $\iota\le\lfloor\frac{d-1}{2}\rfloor$, then this computation gives a good numerical approximation of the exact decomposition, so that the Riemannian Newton (resp. Gauss--Newton) algorithm needs few iterations to converge numerically.
We will see in the following numerical experiments that this initial point is an efficient choice to get a good low rank approximation of a symmetric tensor.

\subsection{Best rank-1 approximation and spectral norm}\label{second}
Let $\bm p\in\S^d(\R^n)$, a best real rank-1 approximation of $\bm p$ is a minimizer of the optimization problem
\begin{equation}\label{1.6}
   \textup{dist}_1(\bm p) :=   \min_{\bm t\in\S^d(\R^{n}), \rank_s(\bm t)=1} ||\bm p-\bm t||_d^2   = \min_{(w,v)\in\R\times\mathbb{S}^{n-1}} ||\bm p-w(v^t\x)^d||_d,
\end{equation}
where $\SS^{n-1}=\{v\in \R^n\mid ||v||=1\}$ is the unit sphere.
This problem is equivalent to $\min_{\bm t\in\T^d(\R^{n}), \rank(\bm t)=1} ||{\bm p}-\bm t||_F^2$  since at least one global minimizer is a symmetric rank-1 tensor \cite{doi:10.1137/110835335}.

The real spectral norm of $\bm p\in\S^{d}(\R^n)$, denoted by $||\bm p||_{\sigma,\R}$ is by definition:
\begin{equation}\label{1.8}
||\bm p||_{\sigma,\R}^2 := \max_{v\in\mathbb{S}^{n-1}}|\bm{p}(v)|.
\end{equation}
  The two problems \eqref{1.6} and \eqref{1.8} are related by the following equality:
  \begin{equation*}
    \textup{dist}_1(\bm p)^2 =||\bm p||_d^2-||\bm p||_{\sigma,\R}^2,
  \end{equation*}
  which we deduce by simple calculus and properties of the apolar norm (see also \cite{doi:10.1137/S0895479898346995,doi:10.1137/110835335}):
\begin{eqnarray*}
  \textup{dist}_1(\bm p)^2 &=&\min_{(w,v)\in\R\times\mathbb{S}^{n-1}}  ||\bm p-w(v^t\x)^d||_d^2\\
  &=& \min_{(w,v)\in\R\times\mathbb{S}^{n-1}}||\bm p||_d^2-2\langle \bm p,w(v^t\x)^d\rangle_d+||w(v^t\x)^d||_d^2\\
  &=&\min_{(w,v)\in\R\times\mathbb{S}^{n-1}}||\bm p||_d^2-2w\,\bm p(v)+w^2\\
  &=&\min_{v\in\mathbb{S}^{n-1}}||\bm p||_d^2-\vert \bm p(v)\vert ^2=||\bm p||_d^2-\max_{v\in\mathbb{S}^{n-1}}|\bm p(v)|^2=||\bm p||_d^2-||\bm p||_{\sigma,\R}^2.
  \end{eqnarray*}
  Therefore, if $v$ is a global maximizer of \eqref{1.8} such that $w=\bm p(v)$, then $w\, v^{\otimes d}$ is a best rank-1 approximation of $\bm p$. Herein, a rank-1 approximation $w\,  v^{\otimes d}$, such that $w=\bm p(v)$ and $||v||=1$, is better when $|w|$ is higher. Therefore, in the following experimentation, we report the weight $w$ obtained by the different methods.

  In \cite{doi:10.1137/130935112} the authors present an algorithm called ``SDP" based on semidefinite relaxations to find a best real rank-1 approximation of a real symmetric tensor by finding a global optimum of $\bm p$ on $\SS^{n-1}$. We choose two examples from \cite{doi:10.1137/130935112}, on which we apply the RNE-N-TR with initial point chosen according to the SMD algorithm adapted for $1\times 1$ matrices.
The reason behind using RNE-N-TR instead of RGN-V-TR is to take advantage of the local quadratic rate of convergence that distinguishes the exact Riemannian Newton iteration in RNE-N-TR \cite[Theorem 6.3.2]{AbsMahSep2008}. We compare these methods with the method CCPD-NLS which is a non-linear least-square solver for the symmetric decomposition from Tensorlab v3 \cite{tensorlab3.0} in MATLAB 7.10, where we run 50 instances (i.e. 50 random initial points obeying Gaussian distributions), and we take the absolute value of the weight in average for this method.

We denote by $|w_{\mathrm{sdp}}|$ (resp. $|w_{\mathrm{rne}}|$) the weight in absolute value given by SDP (resp. RNE-N-TR), and $|w_{\mathrm{ccpd}}|$ denotes the absolute value of the weight in average given by CCPD-NLS. Note that $|w_{\mathrm{sdp}}|$ is the spectral norm of $\bm p$, since SDP gives a best rank-1 approximation. We report the time spent by SDP from \cite{doi:10.1137/130935112} (resp. RNE-N-TR including the computation time of the initial point) in seconds (s) and we denote it by $t_{\mathrm{sdp}}$ (resp. $t_{\mathrm{rne}}$). We denote by $N_{\mathrm{rne}}$ the number of iterations in RNE-N-TR. We denote by $d_0$ the norm between $\bm p$ and the initial point of RNE-N-TR, and by $d_*$ the norm between $\bm p$ and the solution obtained by RNE-N-TR. We denote by $t_{\mathrm{ccpd}}$ (resp. $N_{\mathrm{ccpd}}$) the time in seconds (s) (resp. number of iterations) in average for CCPD-NLS.

\begin{example}\label{example6}\rm\cite[Example 3.5]{doi:10.1137/130935112}. Consider the tensor $\bm p\in\S^3(\R^n)$ with entries:
  $$(\bm p)_{i_1, i_2, i_3}=\frac{(-1)^{i_1}}{i_1}+\frac{(-1)^{i_2}}{i_2}+\frac{(-1)^{i_3}}{i_3},$$
  corresponding to the polynomial $\bm p= \sum_{|\alpha|=3}(\sum_{i=1}^{n} \alpha_{i} \frac{(-1)^{i}}{i})\,\genfrac(){0pt}{1}{3}{\alpha}\, \x^{\alpha}$.\\
\end{example}
\begin{example}\rm\label{example8}\cite[Example 3.7]{doi:10.1137/130935112}. Consider the tensor $\bm p\in\S^5(\R^n)$ given as:
  $$ (\bm p)_{i_1,...,i_5}=(-1)^{i_1}\log(i_1)+ \cdots +(-1)^{i_5}\log(i_5),
  $$
corresponding to the polynomial $\bm p= \sum_{|\alpha|=5}(\sum_{i=1}^{n}\alpha_{i} (-1)^{i}\log(i))\,\genfrac(){0pt}{1}{5}{\alpha}\, \x^{\alpha}$.\\
\end{example}

\begin{table}[ht!]
  \scriptsize
  \centering
  \caption{Comparison of RNE-N-TR, CCPD-NLS and SDP for Example \ref{example6} and Example \ref{example8}. \label{rank1}}
  \resizebox{\textwidth}{!}{
    \begin{tabular}{|c?c|c|c|c|c?c|c|c|c|c|} \hline
     &\multicolumn{5}{c?}{Example \ref{example6}}& \multicolumn{5}{c|}{Example \ref{example8}}\\\hline n& 10&20&30&40& 50&5&10&15&20& 25\\\thickhline
     $|w_{\mathrm{rne}}|$ &\textbf{17.8} &\textbf{34.2} &\textbf{50.1} &\textbf{65.9} &\textbf{81.6}&\textbf{1.100e+2} &\textbf{8.833e+2} &\textbf{2.697e+3} &\textbf{6.237e+3} &\textbf{11.504e+3} \\\hline $d_0$&32.4 &28.4 &44 &64.6 &78.3&526.1 &6.559e+3 &26.318e+3 &64.268e+3 &132.213e+3 \\\hline $d_*$&13.2 &28.3 &43.8 &59.5 &75.3&477.5 &6.096e+3 &24.643e+3 &60.435e+3 &121.892e+3 \\\hline
      $t_{\mathrm{rne}}$&0.038 &0.304 &1.5 &3.3 &12.1&0.058 &0.282 &3.8 &18.3 &34.8
      \\\hline $N_{\mathrm{rne}}$&5 &4 &4 &4 &6&5 &4 &6 &6 &6 \\\thickhline  $|w_{\mathrm{ccpd}}|$&14.0&29.3&43.3&60.0&75.6&78.9&8.68e+2&2.354e+3&6.148e+3&10.587e+3\\\hline$t_{\mathrm{ccpd}}$&0.173&0.109&0.105&0.122&0.143& 0.093&0.187&1.2&5.5&16.7\\\hline $N_{\mathrm{ccpd}}$&27&25&22&23&22&19&29&16&23&17\\\thickhline
     $|w_{\mathrm{sdp}}|$&\textbf{17.8} &\textbf{34.2} &\textbf{50.1} &\textbf{65.9} &\textbf{81.6}&\textbf{1.100e+2} &\textbf{8.833e+2} &\textbf{2.697e+3} &\textbf{6.237e+3} &\\\hline
     $t_{\mathrm{sdp}}$&2.0&6.0&30.0&245.0&1965.0&1.0&22.0&78.0&1350.0&\\\hline

  \end{tabular}}
\end{table}

The results in \Cref{rank1} show that the RNE-N-TR finds a global minimizer, starting
from the initial point given by the SMD algorithm. The RNE-N-TR algorithm converges to this point in few iterations, and with very reduced time compared to the SDP algorithm especially when $n$ grows. On the other hand, $|w_{\mathrm{ccpd}}|$ is smaller than  $|w_{\mathrm{sdp}}|$, implying that CCPD computes, in several cases, a local minimum, which is not a global minimum i.e. a best rank-1 approximation. In comparison for these cases, RGN-V-TR took more iterations ($\sim$20) than RNE-N-TR and consequently more time, while reaching the same optimimum.

The fact that RNE-N-TR finds the best rank-1 approximation in these examples comes from the good initial point provided by SMD algorithm. However, we have no guarantee that RNE-N-TR with this initial point will always converge to a best rank-1 approximation.
This experimentation shows that RNE-N-TR combined with SMD algorithm for the initial point is an efficient method to get a good real rank-1 approximation of a real symmetric tensor.

\subsection{Symmetric rank-\texorpdfstring{$r$}{Lg} approximation}\label{third} We consider two examples of a real and a complex valued sparse symmetric tensors, in order to compare the performance of RNE-N-TR and RGN-V-TR
with state-of-the-art non-linear least-square solvers CCPD-NLS and SDF-NLS for symmetric decomposition from Tensolab v3 with random initial point following a standard normal distribution. These solvers employ factor matrices as parameterization and use a Gauss--Newton method with dogleg trust region steps called ``NLS-GNDL''. We fix 200 iterations as maximal number of iterations, and we run 50 instances for these methods and we report the minimal, median and maximal residual error denoted `$\mathrm{err}$', such that, $\mathrm{err}:=||\bm p-\bm p_*||_d$, where $\bm p$ is the symmetric tensor to approximate and $\bm p_*$ is the approximate symmetric tensor of rank-$r$. In the computation of the initial point by SMD algorithm in RNE-N-TR and RGN-V-TR, we compute eigenvectors of a random linear combination of multiplication operators. This computation is sensitive to the choice of the linear combination, when the operators are not commuting, which explains why we report also the minimal, median and maximal $\mathrm{err}$ for these two methods. The average of time t is in seconds, and the average number of iterations N is rounded to the closest integer.

\begin{example}\label{example10}Let $\bm p\in\S^3(\R^{10})$ such that:
  \begin{equation*}
(\bm p)_{i_1, i_2, i_3}=\begin{cases} i_1^2+1 & \text{if}~ i_1=i_2=i_3,\\
1 & \text{if}~ [i_1,i_2, i_3]\equiv [i,i,j] \text{ with } i \neq j,
\\
0 & \text{otherwise.}\end{cases}\end{equation*}
($[i_1,i_2, i_3]\equiv [j_{1},j_{2},j_{3}]$ iff there exists a permutation $\sigma\in S_{3}$ such that $[i_{\sigma(1)}, i_{\sigma(2)}, i_{\sigma(3)}] $ $= [j_{1}, j_{2}, j_{3}]$).
This sparse symmetric tensor corresponds to the polynomial $\bm p=\sum_{i=1}^{10} i^{2} x_{i}^{3} + (\sum_{i=1}^{10} x_{i}^{2})\times (\sum_{i=1}^{10} x_{i})$.
\end{example}
\begin{example}\label{example11}Let $\bm p\in\S^3(\C^{10})$ such that:\begin{equation*}
(\bm p)_{i_1, i_2, i_3}=\begin{cases} e^{\sqrt{i_1}+i_1^2 \sqrt{-1}}+\frac{i_1}{10}\sqrt{-1} & \text{if}~ i_1=i_2=i_3,\\
 \frac{i}{10} \sqrt{-1}& \text{if}~ [i_1,i_2, i_3]\equiv [i,i,j] \text{ with } i \neq j,\\
0 & \text{otherwise.}\end{cases}\end{equation*}
This sparse symmetric tensor corresponds to the polynomial $\bm p=\sum_{i=1}^{10} e^{\sqrt{i}+i^2 \sqrt{-1}}\, x_{i}^{3} $ $+ $\linebreak$\sqrt{-1}(\sum_{i=1}^{10} \frac{i}{10} x_{i}^{2}) \times (\sum_{i=1}^{10} x_{i})$.
\end{example}

\begin{table}[ht]
    \scriptsize
\centering
\caption{Comparison of
  \textbf{RNE-N-TR}, \textbf{RGN-V-TR},
  \textbf{CCPD-NLS},
  \textbf{SDF-NLS} for Examples \ref{example10} and \ref{example11}.\label{table4.3}
}
\begin{tabular}{|c|c|c|c|c|c|}
  \hline
  \multicolumn{6}{|c|}{Example \ref{example10}}\\
  \thickhline
  \multirow{2}{*}{r} &\multicolumn{3}{c|}{$\mathrm{err}_{\mathrm{rne}}$} & %
  \multicolumn{1}{c|}{$t_{\mathrm{rne}}$}&\multicolumn{1}{c|}{$N_{\mathrm{rne}}$}\\
\cline{2-6}
   &{min}&{med}&{max}&{avg}&{avg}
  \\\hline
  3&70.6 &96 &134.3 &0.03 &2
  \\\hline
  5&33.3 &54.2 &91.8 &0.08&3
    \\\hline
  10&0.884 &0.884 &94.1 &0.465 &6\\\thickhline
  \multirow{2}{*}{r} & \multicolumn{3}{c|}{$\mathrm{err}_{\mathrm{rgn}}$} &
  \multicolumn{1}{c|}{$t_{\mathrm{rgn}}$}&\multicolumn{1}{c|}{$N_{\mathrm{rgn}}$}\\\cline{2-6}
  &{min}&{med}&{max}&{avg}&{avg}\\
  \hline
    3&70.6 &96 &136.8 &0.064 &3
    \\\hline
    5&33.3& 48.8& 105.3& 0.149& 4
    \\\hline
    10&0.886& 0.886& 10.1&0.836& 7\\\thickhline

\multirow{2}{*}{r} & \multicolumn{3}{c|}{$\mathrm{err}_{\mathrm{ccpd}}$} & %
\multicolumn{1}{c|}{$t_{\mathrm{ccpd}}$}&\multicolumn{1}{c|}{$N_{\mathrm{ccpd}}$}\\\cline{2-6}
&{min}&{med}&{max}&{avg}&{avg}\\
\hline
  3&71 &102 &137.1 &0.067 &14
  \\\hline
  5&34.2 &54.7 &121 &0.116 &26
  \\\hline
  10&7.8 &7.8 &9.7 &0.5 &90
  \\\thickhline
 \multirow{2}{*}{r}   &\multicolumn{3}{c|}{$\mathrm{err}_{\mathrm{sdf}}$} & \multicolumn{1}{c|}{$t_{\mathrm{sdf}}$}&\multicolumn{1}{c|}{$N_{\mathrm{sdf}}$}\\
\cline{2-6}
&{min}&{med}&{max}&{avg}&{avg}
  \\\hline
  3&71 &96.3 &136 &0.155 &14
  \\\hline
  5&34.2 &49.4 &105.3 &0.212&16
  \\\hline
  10&7.8 &8.2 &38.3 &2.3 &158
  \\\hline
\end{tabular}
\begin{tabular}{|c|c|c|c|c|c|}
  \hline
  \multicolumn{6}{|c|}{Example \ref{example11}}\\
\thickhline
  \multirow{2}{*}{r} &\multicolumn{3}{c|}{$\mathrm{err}_{\mathrm{rne}}$} & %
\multicolumn{1}{c|}{$t_{\mathrm{rne}}$}&\multicolumn{1}{c|}{$N_{\mathrm{rne}}$}\\
\cline{2-6}
   &{min}&{med}&{max}&{avg}&{avg}
  \\\hline
  3&22.4 &28.8 &30.9 &0.04& 2
  \\\hline
  5&14.1& 17.4& 24.6& 0.07& 3
  \\\hline
  10&0.164& 0.168& 0.369& 0.113& 2
  \\\thickhline
  \multirow{2}{*}{r} & \multicolumn{3}{c|}{$\mathrm{err}_{\mathrm{rgn}}$} &
  \multicolumn{1}{c|}{$t_{\mathrm{rgn}}$}&\multicolumn{1}{c|}{$N_{\mathrm{rgn}}$}\\\cline{2-6}
  &{min}&{med}&{max}&{avg}&{avg}\\
  \hline
    3&22.4& 27.6& 36.1& 0.065& 3
    \\\hline
    5&14.1& 17.1& 24.6& 0.101& 3
    \\\hline
    10&0.162& 0.164& 0.169& 0.219& 2\\\thickhline

\multirow{2}{*}{r} & \multicolumn{3}{c|}{$\mathrm{err}_{\mathrm{ccpd}}$} & %
\multicolumn{1}{c|}{$t_{\mathrm{ccpd}}$}&\multicolumn{1}{c|}{$N_{\mathrm{ccpd}}$}\\
\cline{2-6}
  &{min}&{med}&{max}&{avg}&{avg}\\
\hline
  3&22.9 &26.8 &35.2 &0.084 &14
  \\\hline
  5&14.9 &17 &26.6 &0.104 &18
  \\\hline
  10&4.8 &4.8 &11.2 &0.506 &60
  \\\thickhline
  \multirow{2}{*}{r}  &\multicolumn{3}{c|}{$\mathrm{err}_{\mathrm{sdf}}$} & \multicolumn{1}{c|}{$t_{\mathrm{sdf}}$}&\multicolumn{1}{c|}{$N_{\mathrm{sdf}}$}\\
\cline{2-6}
&{min}&{med}&{max}&{avg}&{avg}
  \\\hline
3&22.9 &27.4 &35.2 &0.254 &15
  \\\hline
  5&14.9 &17.8 &26.5 &0.35&19
  \\\hline
  10&4.8 &6.2 &12.6 &2.5 &144
  \\\hline
\end{tabular}
\end{table}

The numerical results in \Cref{table4.3} show that the number of iterations of RNE-N-TR and RGN-V-TR method is low compared to the other methods. The iterations in RNE-N-TR and RGN-V-TR are more expensive. The numerical quality of approximation is better for RNE-N-TR and RGN-V-TR than the other methods in this test. It is of the same order as the other methods for $r=3,5$ but much better for $r=10$. This can be explained by the fact that the initial point provided by SMD method is close to a good rank-10 approximation. Notice that when $r=3, 5$ the initial point provided by SMD method, based on truncated SVD and eigenvector computations, yields the same behavior as a random initial point (a random linear combination of the matrices of a pencil is used to compute the eigenvectors in SMD method).

\subsection{Approximation of perturbations of low rank symmetric tensors}\label{first}
In this section, we consider perturbations of random low rank tensors. For a given rank $r$, we choose $r$ random vectors $v_{i}$ of size $n$, obeying Gaussian distributions and compute the symmetric tensor $\bm t=\sum_{i=1}^{r} (v_{i}^{t} \x)^{d}$  of order $d$. We choose a random symmetric tensor $\bm t_{\mathrm{err}}$ of order $d$, with coefficients also obeying Gaussian distributions, normalize it so that its apolar norm is $\epsilon$ and add it to $\bm t$: $\tilde{\bm t}= \bm t + \epsilon \, \frac{\bm t_{\mathrm{err}}}{\|\bm t_{\mathrm{err}}\|_{d}}$. We apply the different approximation algorithms to $\tilde{\bm t}$ and compute the relative error factor ref $:=\frac{\|\bm t_{*}-\bm t\|_{d}}{\epsilon}$ between the approximation $\bm t_{*}$ of rank $r$ computed by the algorithm and the rank-$r$ tensor $\bm t$. We run this computation for $100$ random instances and report the geometric average of the relative error. The average number of iterations $N$ is rounded to the closest integer, and the average time $t$ is in seconds.

As the initial tensor $\tilde{\bm t}$ is in a ball of radius $\epsilon$ centered at the tensor $\bm t$ of rank $r$, we expect $\bm t_{*}$ to be at distance to $\bm t$ smaller than $\epsilon$ and the relative error factor to be less than $1$.

We compare the RNE-N-TR and RGN-V-TR methods with the initial point computed by SMD algorithm, with the recent Subspace Power Method (SPM) of \cite{KileelSubspacepowermethod2019} and the state-of-the-art implementation CPD-NLS of the package Tensorlab v3. Note that CPD-NLS is designed for the canonical polyadic decomposition \cite{doi:10.1002/sapm192761164}. Nevertheless, in practice it is often observed that applying a general tensor rank approximation method (like CPD-NLS) from a symmetric starting point will usually result in a symmetric approximation. Since CPD-NLS is an efficient tensor decomposition routine of Tensorlab v3, we choose to compare our methods with this algorithm in this numerical experiment, using symmetric initial points and verifying that the obtained tensor approximations are symmetric. As SPM works for even order tensors with real coefficients, the comparison in \Cref{table1} is run for tensors in $\S^4(\R^{10})$. In Table \ref{table2}, we compare CPD-NLS, RNE-N-TR, and RGN-V-TR for tensors in $\S^d(\C^{10})$ of order $d=4$ and with complex coefficients.
These tables also provide a numerical comparison with the low rank approximation methods tested in Example 5.4 of \cite{NieLowRankSymmetric2017}, since the setting is the same. We also run this tensor perturbation test on some complex examples in which the  approximation rank is higher than the mode size of the tensor (see \Cref{table3}). We test this with the three methods RNE-N-TR, RGN-V-TR, and CPD-NLS. We run 20 instances, for each example of tensor and $\epsilon$.

The computational time for the methods RNE-N-TR and RGN-V-TR includes the computation of the initial point by the SMD algorithm.
We fix 200 iterations as maximal number of iterations for RNE-N-TR, RGN-V-TR and CPD-NLS. For SPM, the iterations are stopped when the distance between two consecutive iterates is less than $10^{-10}$
or when the maximal number of iterations ($N_{}=400$ in this experimentation) is reached.

\begin{table}[ht!]
    \scriptsize
\centering
\caption{Computational results of \textbf{SPM}, \textbf{RNE-N-TR}, and \textbf{RGN-V-TR} for rank-$r$ approximations in $\S^4(\R^{10})$.\label{table1}}
\begin{tabular}{|c|c?c|c|c?c|c|c?c|c|c|}
\hline
{r} & {$\epsilon$}&{ref$_{\mathrm{spm}}$} & %
    {$t_{\mathrm{spm}}$}&{$N_{\mathrm{spm}}$}&{ref$_{\mathrm{rne}}$} & %
        {$t_{\mathrm{rne}}$}&{$N_{\mathrm{rne}}$}&{ref$_{\mathrm{rgn}}$} & %
            {$t_{\mathrm{rgn}}$}&{$N_{\mathrm{rgn}}$}\\\thickhline

\multirow{5}{*}{1}&$1$&0.103& 0.04& 28& 0.105& 0.07& 2& 0.11& 0.083& 3
 \\\cline{3-11}&$10^{-1}$&0.103& 0.039& 28& 0.104& 0.04& 2& 0.11& 0.069& 3
 \\\cline{3-11}
    &$10^{-2}$&0.1& 0.04& 28& 0.1& 0.04& 2& 0.103& 0.058& 2
 \\\cline{3-11}&$10^{-4}$&0.101& 0.041& 29& 0.101& 0.041& 2& 0.166& 0.044& 2
 \\\cline{3-11}&$10^{-6}$&0.104& 0.041& 30& 0.104& 0.041& 2& 0.17& 0.045& 2
 \\\thickhline\multirow{5}{*}{2}&$1$&0.15& 0.1& 69& 0.175& 0.137& 3& 0.159& 0.16& 3
\\\cline{3-11}&$10^{-1}$&0.15& 0.091& 65& 0.153& 0.076& 2& 0.159& 0.13& 3
 \\\cline{3-11} &$10^{-2}$&0.144& 0.086& 66& 0.149& 0.072& 2& 0.15& 0.111& 2
 \\\cline{3-11}&$10^{-4}$&0.148& 0.089& 66& 0.157& 0.076& 2& 0.199& 0.076& 2
 \\\cline{3-11}&$10^{-6}$&0.146& 0.087& 67& 0.151& 0.073& 2& 0.195& 0.073& 2
 \\ \thickhline\multirow{5}{*}{3}&$1$&0.185& 0.126& 109& 0.194& 0.172& 3& 0.194& 0.208& 3
\\\cline{3-11}&$10^{-1}$&0.185& 0.135& 111& 0.195& 0.128& 2& 0.195& 0.198& 3
 \\\cline{3-11} &$10^{-2}$&0.187& 0.119& 113& 0.208& 0.099& 2& 0.195& 0.175& 2
 \\\cline{3-11}&$10^{-4}$&0.182& 0.102& 106& 0.197& 0.092& 2& 0.217& 0.095& 2
 \\\cline{3-11}&$10^{-6}$&0.183& 0.101& 105& 0.196& 0.094& 2& 0.206& 0.097& 2
 \\\thickhline \multirow{5}{*}{4}&$1$&0.217& 0.159& 168& 0.25& 0.546& 8& 0.225& 0.278& 3
 \\\cline{3-11}&$10^{-1}$&0.218& 0.161& 168& 0.245& 0.319& 4& 0.228& 0.241& 3
 \\\cline{3-11}&$10^{-2}$&0.211& 0.163& 162& 0.241& 0.134& 2& 0.219& 0.239& 3
 \\\cline{3-11}&$10^{-4}$&0.216& 0.167& 169& 0.26& 0.128& 2& 0.261& 0.136& 2
 \\\cline{3-11}&$10^{-6}$&0.227& 0.167& 168& 0.259& 0.126& 2& 0.259& 0.133& 2
 \\\thickhline\multirow{5}{*}{5}&$1$&0.244& 0.207& 217& 0.339& 1.199& 13& 0.252& 0.594& 5
\\\cline{3-11}&$10^{-1}$&0.244& 0.221& 220& 0.255& 0.252& 2& 0.252& 0.317& 3
 \\\cline{3-11}&$10^{-2}$&0.247& 0.223& 218& 0.292& 0.175& 2& 0.254& 0.321& 3
  \\\cline{3-11}&$10^{-4}$&0.246& 0.215& 213& 0.304& 0.16& 2& 0.304& 0.165& 2
  \\\cline{3-11}&$10^{-6}$&0.249& 0.231& 226& 0.307& 0.158& 2& 0.311& 0.165& 2
  \\\hline
\end{tabular}
\end{table}
\begin{table}[ht!]
    \scriptsize
\centering
\caption{Computational results of \textbf{CPD-NLS}, \textbf{RNE-N-TR}, and \textbf{RGN-V-TR} for rank-$r$ approximations in $\S^4(\C^{10})$.\label{table2}}
\begin{tabular}{|c|c?c|c|c?c|c|c?c|c|c|}
\hline
{r} & {$\epsilon$}&{ref$_{\mathrm{cpd}}$} & %
    {$t_{\mathrm{cpd}}$}&{$N_{\mathrm{cpd}}$}&{ref$_{\mathrm{rne}}$} & %
        {$t_{\mathrm{rne}}$}&{$N_{\mathrm{rne}}$}&{ref$_{\mathrm{rgn}}$} & %
            {$t_{\mathrm{rgn}}$}&{$N_{\mathrm{rgn}}$}\\\thickhline

\multirow{5}{*}{1}&$1$&0.117 &0.05 & 10& 0.11& 0.06& 2& 0.115& 0.069& 3 \\\cline{3-11}&$10^{-1}$&0.118 &0.046 & 10& 0.108& 0.054& 2& 0.112& 0.084& 3\\\cline{3-11}
    &$10^{-2}$&0.116 &0.043 &10 &0.107& 0.044& 2& 0.11& 0.06& 2\\\cline{3-11}&$10^{-4}$&0.114 &0.042 &10& 0.107& 0.037& 2& 0.227& 0.038& 2 \\\cline{3-11}&$10^{-6}$&0.113 &0.037 &11 & 0.112& 0.036& 2& 0.237& 0.037& 2 \\\thickhline\multirow{5}{*}{2}&$1$&0.167  &0.072 &14&0.162& 0.078& 2& 0.166& 0.118& 3\\\cline{3-11}&$10^{-1}$&0.169 &0.077 &14 & 0.164& 0.063& 2& 0.167& 0.111& 3 \\\cline{3-11} &$10^{-2}$&0.162 &0.071 &14 & 0.163& 0.061& 2& 0.163& 0.09& 2\\\cline{3-11}&$10^{-4}$&0.171  &0.071 &14 & 0.163& 0.062& 2& 0.204& 0.063& 2 \\\cline{3-11}&$10^{-6}$&0.175 &0.069 &13 & 0.162& 0.062& 2& 0.23& 0.064& 2 \\ \thickhline\multirow{5}{*}{3}&$1$&0.201 &0.115 &16 & 0.204& 0.135& 2& 0.204& 0.163& 3\\\cline{3-11}&$10^{-1}$&0.223 &0.109 &17 & 0.206& 0.091& 2& 0.203& 0.157& 3\\\cline{3-11} &$10^{-2}$&0.228 &0.117 &17 & 0.209& 0.086& 2& 0.203& 0.152& 2\\\cline{3-11}&$10^{-4}$&0.202 &0.103 &15 & 0.205& 0.091& 2& 0.243& 0.093& 2 \\\cline{3-11}&$10^{-6}$&0.284 &0.124 &19 &0.211& 0.088& 2& 0.234& 0.091& 2\\\thickhline \multirow{5}{*}{4}&$1$&0.235 &0.149 &18 & 0.234& 0.192& 3& 0.234& 0.23& 3 \\\cline{3-11}&$10^{-1}$&0.232 &0.165 &19 & 0.244& 0.132& 2& 0.238& 0.215& 3 \\\cline{3-11}&$10^{-2}$&0.237 &0.142 &17 & 0.25& 0.113& 2& 0.232& 0.219& 3 \\\cline{3-11}&$10^{-4}$&0.238 &0.158 &19 & 0.25& 0.112& 2& 0.255& 0.117& 2 \\\cline{3-11}&$10^{-6}$&0.232 &0.161 &19 & 0.254& 0.111& 2& 0.274& 0.116& 2 \\\thickhline\multirow{5}{*}{5}&$1$&0.275 &0.21 &22 & 0.261& 0.269& 3& 0.261& 0.345& 3\\\cline{3-11}&$10^{-1}$&0.264 &0.186 &19 & 0.269& 0.211& 2& 0.261& 0.288& 3  \\\cline{3-11}&$10^{-2}$&0.266 &0.211 &22 & 0.305& 0.148& 2& 0.264& 0.292& 3 \\\cline{3-11}&$10^{-4}$&0.265 &0.169 &18 & 0.293& 0.158& 2& 0.299& 0.163& 2 \\\cline{3-11}&$10^{-6}$&0.266 &0.206 &21 & 0.298& 0.158& 2& 0.301& 0.161& 2\\\hline
\end{tabular}
\end{table}

In Tables \ref{table1}, \ref{table2}, the number of iterations of the RNE-N-TR and RGN-V-TR methods is significantly smaller than the number of iterations of the other methods. In SPM, the number of iterations to get an approximation of a single rank-$1$ term of the approximation is about $30$, indicating a practical linear convergence as predicted by the theory \cite[Theorem 5.10]{KileelSubspacepowermethod2019}.
As the method CPD-NLS is based on a quasi-Newton iteration, its local convergence is sub-quadratic, which also explains the relatively high number of iterations. The low number of iterations in RNE-N-TR and RGN-V-TR can be explained by the choice of the initial point by SMD algorithm. This provides a good initialization such that a solution by RNE-N-TR and RGN-V-TR can be obtained in a few number of iterations.

The cost of an iteration appears to be higher in RNE-N-TR and RGN-V-TR than in the other methods. Nevertheless, the total time is of the same order. Note that the cost of an iteration seems higher in RGN-V-TR than RNE-N-TR. Despite the fact that the first algorithm computes the Gauss--Newton approximation of the Hessian matrix, whereas the second algorithm computes the exact Hessian matrix. This can be explained by the use of a parametrization in the first algorithm (i.e. the Cartesian product of Veronese manifolds), which involves a more expensive retraction using SVD decomposition on larger matrices.

These experimentation also show a good numerical behavior for the Riemannian methods. In particular, the numerical quality of the low rank approximation is good for RNE-N-TR and RGN-V-TR, in comparison with SPM and CPD-NLS. The average of the relative error factor in RNE-N-TR and RGN-V-TR is less than $1$. The numerical results in \cite[Example 5.4]{NieLowRankSymmetric2017} for GP method and small perturbations ($\epsilon\in \{10^{-2},10^{-4},10^{-6}\}$), show that the numerical quality in GP-OPT method is worse than with these methods.
\begin{table}[ht!]
\scriptsize
\centering
\caption{Computational results of \textbf{CPD-NLS}, \textbf{RNE-N-TR}, and \textbf{RGN-V-TR}.\label{table3}}
\resizebox{\textwidth}{!}{
\begin{tabular}{|c|c?c|c|c|c?c|c|c|c?c|c|c|c|}
\hline
\multirow{3}{*}{{$\begin{array}{c}
d\\n\\r
\end{array}$\vspace{2mm}}}&\multirow{3}{*}{$\epsilon$}&\multicolumn{2}{c|}{ref$_{\mathrm{cpd}}$} & %
    {$t_{\mathrm{cpd}}$}&{ $N_{\mathrm{cpd}}$}&\multicolumn{2}{c|}{ref$_{\mathrm{rne}}$} & %
        {$t_{\mathrm{rne}}$}&{$N_{\mathrm{rne}}$}&\multicolumn{2}{c|}{ref$_{\mathrm{rgn}}$} & %
            {$t_{\mathrm{rgn}}$}&{ $\begin{array}{c}\\ N_{\mathrm{rgn}}\\ \ \end{array}$}\\
\cline{3-14}&
  &{min}&{max}&avg&avg &{min}&{max}&avg&avg &{min}&{max}&avg&avg\\
\thickhline
\multirow{4}{*}{$\begin{array}{c}
5\\4\\10
\end{array}
$}&$1$&0.803&7.8&2.1&162& 0.834& 25.7& 3.4& 273& 0.815& 1.1& 1.2& 49\\\cline{3-14}&$10^{-2}$&0.849&881.2&2.3&172&0.718& 0.933& 0.197& 16& 0.718& 0.933& 0.078& 4\\\cline{3-14}
    &$10^{-4}$&1.5&9.8e+4&2&157&0.711& 0.933& 0.0379& 3& 0.711& 0.933& 0.0535& 3\\\cline{3-14}
        &$10^{-6}$&776.4&1.9e+7&2&184&0.789& 0.912& 0.042& 3& 0.789& 0.912& 0.071& 4 \\\thickhline\multirow{4}{*}{$\begin{array}{c}
5\\15\\20
\end{array}
$}&$1$&0.15&1.9e+3&9.8&45&0.153& 0.172& 22.6& 3& 0.153& 0.172& 27.1& 3\\\cline{3-14}&$10^{-2}$&0.149&1.2e+5&12.7&62&0.151& 0.183& 13.6& 2& 0.148& 0.169& 26.9& 3 \\\cline{3-14} &$10^{-4}$&0.152& 9.2e+6&13.8&67&0.152& 0.181& 13.6& 2& 0.152& 0.181& 14.7& 2\\\cline{3-14} &$10^{-6}$&0.155 & 1.4e+9 &11.9 &59 &0.156& 0.173& 13.8& 2& 0.156& 0.174& 14.8& 2\\ \thickhline\multirow{4}{*}{$\begin{array}{c}
6\\5\\12\end{array}$}&$1$&0.515&109.7&1.4&61&0.467& 0.706& 0.342& 4& 0.467& 0.622& 0.353& 4\\\cline{3-14}&$10^{-2}$&0.519 & 2.4e+4 &3.8 &143 &0.472& 0.62& 0.155& 3& 0.472& 0.62& 0.205& 3\\\cline{3-14} &$10^{-4}$& 0.518&9.6e+5&2.9&137&0.493& 0.622& 0.222& 4& 0.493& 0.622& 0.35& 5\\\cline{3-14} &$10^{-6}$&1.1&9.7e+7&2.3&112&0.647& 0.6& 0.098& 2& 0.492& 0.591& 0.211& 3\\\thickhline \multirow{4}{*}{$\begin{array}{c}
7\\8\\15\end{array}$}&$1$&0.183
        &2.1e+3 &54.9 &46 &0.171& 0.21& 8.3& 3& 0.171& 0.21& 8.5& 3\\\cline{3-14}&$10^{-2}$&0.174& 5.3e+4&52.3&47&0.137& 0.171& 4.7& 2& 0.169& 0.201& 8.1& 3\\\cline{3-14}&$10^{-4}$&0.168&3.5e+6&63.1&54&0.138& 0.169& 4.5& 2& 0.138& 0.169& 4.7& 2\\\cline{3-14}&$10^{-6}$&0.179 &1.1e+9 &75.6 &65 &0.142& 0.177& 4.4& 2& 0.142& 0.177& 4.6& 2\\\hline
\end{tabular}}
\end{table}

We also compare CPD-NLS, RNE-N-TR and RGN-V-TR for perturbation of random tensors of rank $r>n$ and report the minimal and maximal relative error with the average number of iterations $N$ (rounded to the closest integer) and the average time $t$ (in seconds) in \Cref{table3}. The considered cases in \Cref{table3} are for the degree $d$, the number of variables $n$ and the rank $r$ such that $(d,n,r)$ is respectively (5,4,10), (5,15,20), (6,5,12), and (7,8,15).
We see that the maximal relative error factor {\em ref} reached by RNE-N-TR and RGN-V-TR with initial point by SMD is less than 1. There is an exception in the first case when $\epsilon=1$, where a large number of iterations is needed for RNE-N-TR and RGN-V-TR. On the other hand, the minimal relative error of CPD-NLS is less than 1 in almost all \Cref{table3}, whereas its maximal relative error is higher than 1 in all \Cref{table3}.

This numerical experiment indicates that for these examples of random low rank tensors with random noise, SMD provides a good initial point, close enough to a good solution, so that RNE-N-TR and RGN-V-TR need a few number of iterations. In this context, the combination of an adaptive choice of initial point and a Newton-type method is successful.

\subsection{Symmetric tensor with large differences in the scale of the weight vector}\label{tst4} Consider the case of a real symmetric tensor $\bm t=\sum_{i=1}^{r}{w_i(v_i^t\bm x)^d}$, $\|v_i\|=1$, $w_i>0$, with large differences in the scale of the weights $w_{i}$ i.e. $\frac{\mathrm{max}_iw_i}{\mathrm{min}_iw_i}$ is large. More precisely, there are large differences in the norms of the rank-1 symmetric tensors $w_i(v_i^t \bm x)^d$.
We randomly sample real symmetric tensors of order $d=3$ and dimension $n=7$ with $r\in\{5,10\}$, according to the following model:
\begin{equation*}
  \bm t=\sum_{i=1}^{r}{10^{\frac{is}{r}}(v_i^t\bm x)^d},~\|v_i\|=1.
\end{equation*}
The components of the weight vector increase exponentially from $10^{\frac{s}{r}}$ to $10^s$.

We aim to compare the performance of RNE-N-TR and RGN-V-TR methods (hereafter called respectively RNE and RGN for shortness) in this configuration. We run the following test:
\begin{itemize}
  \item Take $\bm t$ as above, and create a perturbated tensor $\bm t_p=\frac{\bm t}{\|\bm t\|}+10^{-5}\frac{\bm t_{\mathrm{err}}}{\|\bm t_{\mathrm{err}}\|}$, where $\bm t_{\mathrm{err}}\in\R[\x]_d$ is a random symmetric tensor with coefficients obeying Gaussian distributions;
  \item run 20 random initial points obeying Gaussian distributions;
  \item run RNE and RGN with a maximum of iterations $N_{\mathrm{max}}=500$, and report  in average respectively: the relative error (in geometric average) $\mathrm{err}_{\mathrm{rel}}:=\big\|\frac{\bm t}{\|\bm t\|}-\bm t_*\big\|_d$, where $\bm t_*$ is a rank-$r$ symmetric decomposition obtained by these methods; the number of iterations $N_{\mathrm{iter}}$; and the computation time $t$ in seconds (s). We also report the number $N_{\mathrm{opt}}$ of instances where $\mathrm{err}_{\mathrm{rel}}\le 1.1.10^{-5}$.
\end{itemize}
\begin{table}[ht]
\scriptsize
\centering
\caption{Computational results for RNE-N-TR and RGN-V-TR for scaled weights.\label{scale}}

\begin{tabular}{|c?c|c?c|c?c|c|} \hline
  \multicolumn{7}{|c|}{$r=5$}\\
  \thickhline
  $s$& \multicolumn{2}{c?}{$1$} &\multicolumn{2}{c?}{$2$}&\multicolumn{2}{c|}{$3$} \\
\thickhline
Alg & RNE & RGN & RNE & RGN & RNE & RGN\\\hline
$\mathrm{err}_{\mathrm{rel}}$ &0.456&5.8e-6&0.411&5.5e-6&0.246&1.4e-5\\\hline
 $N_{\mathrm{iter}}$&120&39&165&61&175&77 \\\hline
$t$ &2.0&1.1&2.4&1.4&2.5&1.8 \\\hline
$N_{\mathrm{opt}}$ &0&20&0&20&0&17\\\hline
\end{tabular}

\begin{tabular}{|c?c|c?c|c?c|c|} \hline
  \multicolumn{7}{|c|}{$r=10$}\\
  \thickhline
  $s$& \multicolumn{2}{c?}{$1$} &\multicolumn{2}{c?}{$2$}&\multicolumn{2}{c|}{$3$} \\
\thickhline
Alg & RNE & RGN & RNE & RGN & RNE & RGN\\\hline
$\mathrm{err}_{\mathrm{rel}}$ &0.372&9.4e-6&0.195&1.6e-6&0.224&6.8e-5 \\\hline
 $N_{\mathrm{iter}}$&423&87&270&186&392&206 \\\hline
$t$ &16.9&6.3&10.8&13.7&15.5&15.0 \\\hline
$N_{\mathrm{opt}}$ &0&18&0&20&0&9\\\hline
\end{tabular}
\end{table}

The results in \Cref{scale} show that RGN outperforms RNE. In fact, the average of the relative error in RGN is better, up to five order of magnitude, than in RNE. Moreover, starting from the same 20 random initial points in the two methods; RGN succeeded to reach an optimum, at least in 9 instances with the different order of scale $s$, while RNE could not find any optimum. Notice that, as we mentionned before, the cost of one iteration in RGN is higher than in RNE. The good performance of RGN compared to RNE in this test was expected, since the orthonormal basis of the tangent space computed in RGN method is independent of the weight factor. This behavior was also observed in \cite[Subsection 3.4]{Breiding2018} for real multilinear tensors, parametrized by
Segre manifolds.
\section{Conclusion}
\label{sec:conclusions}
We presented two Riemannian Newton optimization methods for approximating a given complex-valued symmetric tensor by a low rank symmetric tensor. We used in subsection \ref{formulation} the weighted normalized factor matrices parametriz\-ation for the constraint set. We developed an exact Riemannian Newton iteration with exact computation of the Hessian matrix (RNE-N-TR). We exploited in subsection \ref{computation} the properties of the apolar product and of partial complex derivatives, to deduce a simplified and explicit computation of the gradient and Hessian of the square distance function in terms of the points, weights of the decomposition and the tensor to approximate. We proved that under some regularity conditions on non-defective tensors in the neighborhood of the initial point, the iteration is converging to a local minimum. In subsection \ref{ver}, we parametrized the constraint set via Cartesian product of Veronese manifolds. Taking into account the geometry of the Veronese manifold, we constructed a suitable basis for its tangent space at a given point on this manifold. Using this basis, we developed a Gauss--Newton iteration (RGN-V-TR). In subsection \ref{retraction1}, we presented a retraction operator on the Veronese manifold.
We showed that, combined with SMD method for choosing the initial point,
the two methods have a good practical behavior in several experiments: in subsection \ref{second} to compute a best real rank-1 approximation of a real symmetric tensor, in subsection \ref{third} to compute a low rank approximation of sparse symmetric tensors, and in subsection \ref{first} to compute low rank approximations of random perturbations of low rank symmetric tensors. In subsection \ref{tst4}, we showed that the numerical behavior of RNE-N-TR is affected by large differences in the scaling of the rank-1 symmetric tensor, where RGN-V-TR outperformed this algorithm in this case.

In future work, we plan to investigate the computation of initial points for the Riemannian Newton iterations applied to tensors of higher rank and the low rank approximation problem for other families of tensors, such as multi-symmetric or skew symmetric tensors.



 





\section{Acknowledgement}
We would like to thank the anonymous reviewers for their valuable comments that improved this article.
\bibliographystyle{elsarticle-num} 
\bibliography{Tensor}

\appendix

\section{Computation details}\label{append}
This appendix gives first the proof of proposition \ref{prop:newton:nr} which relates the Riemannian gradient and Hessian to the real gradient and Hessian (\ref{proof1}), then the proof of the explicit formulas of the real gradient (\ref{proof2}) and Hessian (\ref{proof3}) stated respectively in propositions \ref{gradient} and \ref{hessian}.
\subsection{Proof of proposition \ref{prop:newton:nr}}\label{proof1}
Let $y=(w, v_1, \dots, v_r, v\sp{\prime}_1, \dots, v\sp{\prime}_r )\in\N_r$. Let $\mathcal{P}_y$ be the orthogonal projector on $T_y\N_r$. Let $Q\in \R^{(r+2nr)\times (r+(2n-1)r)}$ such that its columns form an orthonormal basis of the image of $\mathcal{P}_y$ or equivalently of $T_y\N_r$. As the Riemannian gradient of $f$ is the projection of $Df_R$, the first order differentials of $f_R$, on the tangent space $T_y\N_r$ \cite[Chapter 5]{AbsMahSep2008}, we have $G=Q^tG^R$, where $G^R$ is the vector which represents the classical first order partial derivatives of $f_R$ at $y^R$ in the canonical basis. \\ Let $\eta\in T_y\N_r$, $z\in T_y\N_r^\perp$. We have from \cite{RH} that the Riemannian Hessian matrix of $f$ at $y$ is given by the formula: $H\eta=\mathcal{P}_yH^R\eta+\mathfrak{U}_y(\eta,\mathcal{P}_y^\perp G^R)$, where  $H^R$ is the matrix of the second order derivatives of $f_R$ at $y^R$ in the canonical basis, $\mathfrak{U}_y$ is the Weingarten map on $\N_r$ at $y$ given by $\mathfrak{U}_y(\eta,z)=\mathcal{P}_yD_{\eta}\mathcal{P}z$, where $\mathcal{P}$ is a matrix valued function on $\N_r$ determined as follows: $\mathcal{P}:y\in\N_r\mapsto \mathcal{P}_y$, and $D_{\eta}\mathcal{P}z$ represent the time derivative of $y\mapsto\mathcal{P}_yz$ in terms of the time derivative of $y$ i.e. $\dot{y}\in T_y\N_r$ applied at $\dot{y}=\eta$, and $\mathcal{P}_y^\perp=I-\mathcal{P}_y$ is the orthogonal projector on $T_y\N_r^\perp$.\\
    As $y\in\N_r$ we have $w\in{\R_+^*}^r$, and $\dbl{v}_i:=(v_i, v\sp{\prime}_i)\in\mathbb{S}^{2n-1}$, $\forall 1\le i\le r$.
Let $u=(u_0, u_1, \dots$\linebreak$, u_r, u\sp{\prime}_1, \dots, u\sp{\prime}_r)\in\R^{r+2nr}$, such that $\dbl{u}_i=(u_i,u\sp{\prime}_i)$, $\forall 1\le i\le r$. Let $\mathcal{P}_{w}$ (resp. $\mathcal{P}_{\dbl{v}_i}$) denote the orthogonal projector on $T_w(\R^*_+)^r=\R^r$ (resp. $T_{\dbl{v}_i}\mathbb{S}^{2n-1}$), we have that:
    $\mathcal{P}_w(u_0)=u_0$, $\mathcal{P}_{\dbl{v}_i}\dbl{u}_i=(I_{2n}-\dbl{v}_i\dbl{v}_i^t)\dbl{u}_i$, $\forall 1\le i\le r$, thus:\\
$$
\mathcal{P}_yu=\begin{pmatrix}u_0 \\ ((I_{2n}-\dbl{v}_1\dbl{v}_1^t)\dbl{u}_1)[1:n] \\ \vdots \\ ((I_{2n}-\dbl{v}_r\dbl{v}_r^t)\dbl{u}_r)[1:n] \\  ((I_{2n}-\dbl{v}_1\dbl{v}_1^t)\dbl{u}_1)[n+1:2n] \\  \vdots \\ ((I_{2n}-\dbl{v}_r\dbl{v}_r^t)\dbl{u}_r)[n+1:2n]  \end{pmatrix}=\begin{pmatrix}u_0 \\ u_1-v_1\dbl{v}_1^t\dbl{u}_1\\ \vdots \\ u_r-v_r\dbl{v}_r^t\dbl{u}_r\\ u\sp{\prime}_1-v\sp{\prime}_1\dbl{v}_1^t\dbl{u}_1 \\ \vdots \\  u\sp{\prime}_r-v\sp{\prime}_r\dbl{v}_r^t\dbl{u}_r \end{pmatrix},
\mathcal{P}_y^\perp u=\begin{pmatrix}0_r \\ v_1\dbl{v}_1^t\dbl{u}_1 \\ \vdots \\ v_r\dbl{v}_r^t\dbl{u}_r\\v\sp{\prime}_1\dbl{v}_1^t\dbl{u}_1 \\ \vdots \\  v\sp{\prime}_r\dbl{v}_r^t\dbl{u}_r  \end{pmatrix}.
$$
Let $\mathfrak{U}_{\dbl{v}_i}$ be the Weingarten map on $\mathbb{S}^{2n-1}$ at $\dbl{v}_i$. For $\eta=(\eta_0, \eta_1, \dots, \eta_r, \eta\sp{\prime}_1, \dots, \eta\sp{\prime}_r)\in T_y\N_r$, and $z=(z_0, z_1, \dots, z_r, z\sp{\prime}_1, \dots, z\sp{\prime}_r)\in T_y\N_r^\perp$ with $\dbl{\eta}_i=(\eta_i,\eta\sp{\prime}_i)\in T_{\dbl{v}_i}\mathbb{S}^{2n-1}$ and $\dbl{z}_i=(z_i,z\sp{\prime}_i)\in {T_{\dbl{v}_i}{\mathbb{S}^{2n-1}}}^\perp$, $\forall 1\le i\le r$, we have from \cite{RH}: $\mathfrak{U}_{\dbl{v}_i}(\dbl{\eta}_i,\dbl{z}_i)=-\dbl{\eta}_i\dbl{v}_i^t\dbl{z}_i$. Thus, with respect to the parameterization that we consider we find that:
$$
\mathfrak{U}_y(\eta,z)=-\begin{pmatrix}0_r \\ \eta_1\dbl{v}_1^t\dbl{z}_1\\ \vdots \\ \eta_r\dbl{v}_r^t\dbl{z}_r\\ \eta\sp{\prime}_1\dbl{v}_1^t\dbl{z}_1\\ \vdots \\ \eta\sp{\prime}_r\dbl{v}_r^t\dbl{z}_r\end{pmatrix}.
$$
Let $G^R=(g_0; g_1; \dots; g_r; g\sp{\prime}_1; \dots; g\sp{\prime}_r)\in \R^{r+2nr}$ and $\dbl{g}_i=(g_i, g\sp{\prime}_i)$, $l_i=\dbl{v}_i\dbl{v}_i^t\dbl{g}_i$ for $i=1, \dots, r$. We obtain $\mathfrak{U}_y(\eta,\mathcal{P}_y^\perp G^R)$ by substituting $\dbl{z}_i$  by $l_i$ in $\mathfrak{U}_y(\eta,z)$. Since $\dbl{v}_i^t\dbl{v}_i=||\dbl{v}_i||^2=1$, we find that $\mathfrak{U}_y(\eta,\mathcal{P}_y^\perp G^R)=\begin{pmatrix}0_r \\ \eta_1 \dbl{v}_1^{t}\dbl{g}_1 \\ \vdots \\ \eta_r\dbl{v}_r^{t}\dbl{g}_r \\ \eta\sp{\prime}_1\dbl{v}_1^{t}\dbl{g}_1\\ \vdots \\ \eta\sp{\prime}_r \dbl{v}_r^{t}\dbl{g}_r\end{pmatrix}=S\eta$, where $S=\mathrm{diag}(0_{r\times r},\tilde{S},\tilde{S})$, with $\tilde{S}=\mathrm{diag}(s_1I_n, \dots, s_rI_n)$, $s_i=\dbl{v}_i^{t}\dbl{g}_i =\langle v_i, g_i\rangle+\langle v\sp{\prime}_i, g\sp{\prime}_i\rangle$. Since, $\mathfrak{U}_y(\eta,z)=\mathcal{P}_yD_{\eta}\mathcal{P}z$, and $\mathcal{P}_y\circ\mathcal{P}_y=\mathcal{P}_y$, we can write $\mathfrak{U}_y(\eta,z)=\mathcal{P}_y\mathfrak{U}_y(\eta,z)$. Hence, $\mathfrak{U}_y(\eta,\mathcal{P}_y^\perp G^R)=\mathcal{P}_yS\eta=\mathcal{P}_yS\mathcal{P}_y\eta$, since $\mathcal{P}_y\eta=\eta$ for $\eta\in T_y\N_r$. Thus we have: $H\eta=\mathcal{P}_y(H^R+S)\mathcal{P}_y\eta$, and then $H=\mathcal{P}_y(H^R+S)\mathcal{P}_y$. Herein, $H$ can be written with respect to the basis $Q$ as follows: $H=Q^t(H^R+S)Q$, which ends the proof.

\subsection{Real gradient and Hessian}\label{proofs}
In order to give the proofs of propositions \ref{gradient} and \ref{hessian}, we need the following discussion and auxiliary lemma.\\
We describe the real gradient and Hessian, by using complex variables and their conjugates. Recall from Brandwood \cite{1983IPCRS.130...11B} that transforming the pair $(\Re(z),\Im(z))$ of real and imaginary parts of a given complex variable $z$ into the pair $(z,\overline{z})$ is a simple linear transformation, which allows us to achieve explicit and simple computation of the gradient and Hessian of $f$.\\
Recall that $\cR_r=\left\{(W,\Re(V),\Im(V))\in \R^{r}\times \R^{n\times r} \times \R^{n\times r} \mid W\in\R^r, V\in\C^{n\times r}\right\}$, and that $f_R$ is the function $f$ seen as a function on  $\cR_{r}$.

Let $\cC_{r}=\left\{(W,V,\overline{V})\in \R^{r}\times \C^{n\times r}\times\C^{n\times r} \mid W\in\R^r,\right.$ $\left.V\in\C^{n\times r}\right\}$ and
\begin{equation}\label{eq1}
  K=\begin{bmatrix}I_{r}&0_{r\times 2nr}\\0_{2nr\times r}&J\end{bmatrix}
\end{equation}
where $J=\begin{bmatrix}I_{nr}&\ib I_{nr}\\I_{nr}&-\ib I_{nr}\end{bmatrix}$.
The linear map $K$ is an isomorphism between the $\R$-vector spaces $\cR_{r}$ and $\cC_{r}$. Its inverse is given by $K^{-1}=\begin{bmatrix}I_r&0_{r\times 2nr}\\0_{2nr\times r}&\frac{1}{2}J^*\end{bmatrix}$.

Let $f_C$ be the function $f$ seen as a function on  $\cC_{r}$. Considering $f_C$ for the computation of the gradient and the Hessian yields more elegant expressions than considering $f_R$. For this reason, we compute first the gradient and the Hessian of $f_C$, and then we use the isomorphism $K$ in \eqref{eq1} to get the real gradient and the Hessian of $f_R$.\\
\textbf{Lemma A.1.} \emph{The complex gradient $G^C$ can be transformed into the real gradient $G^R$ as follows:
    \begin{equation}\label{re1}
        G^R=K^tG^C.
    \end{equation}
    Similarly $H^R$ and $H^C$ are related by the following formula:
    \begin{equation}\label{er2}
        H^R=K^tH^CK.
    \end{equation}}

\begin{proof}
  See \cite{Sorber2012}  and the references therein.
\end{proof}
We can now present the proofs of propositions \ref{gradient} and \ref{hessian}.
\subsubsection{\textbf{Proof of proposition \ref{gradient}}}\label{proof2}
We can write $f_C$ as
$f_C=\frac{1}{2}(f_1-f_2-f_3+f_4)$,
where
      \begin{equation*}
          \begin{split} f_1&=\Big|\Big|\sum_{i=1}^{r}{w_i(v_i^t\x)^d}\Big|\Big|_d^2
      =\sum_{|\alpha|=d}{\binom{d}{\alpha}\Big(\sum_{i=1}^{r}{w_i\bar{v}_i^\alpha}\Big)\Big(\sum_{i=1}^{r}{w_iv_i^\alpha}\Big)}~~~\text{(by \cref{norm}),}\\
      f_2&=\big\langle\sum_{i=1}^{r}{w_i(v_i^t\x)^d},\bm p\big\rangle_d=\sum_{i=1}^{r}{w_i\bm p(\bar{v}_i)}~~~\text{(by 1. in lemma \ref{ap}),}\\ f_3&=\bar{f_2}=\sum_{i=1}^{r}{w_i\bar{\bm p}(v_i)}, \text{and}~f_4=||\bm p||_d^2.
  \end{split}
\end{equation*}
Let us decompose $G^{C}$ as $G^C=\begin{pmatrix}G_1\\\tilde{G}_2\\\tilde{G}_3\end{pmatrix}$, with $G_1=(\frac{\partial f_C}{\partial w_j})_{1\le j\le r}$, $\tilde{G}_2=(\frac{\partial f_C}{\partial v_j})_{1\le j\le r}$ and $\tilde{G}_3=(\frac{\partial f_C}{\partial \overline{v}_j})_{1\le j\le r}$. As $f_C$ is a real valued function, we have that $\frac{\partial f_C}{\partial\bar{v}_j}=\overline{\frac{\partial f_C}{\partial v_j}}$  \cite{nehari1968introduction, remmert1991theory}, thus $\tilde{G}_3=\overline{\tilde{G}}_2$. Let us start by the computation of $G_1$:
\begin{equation*}
    \begin{split}
\frac{\partial f_1}{\partial w_j}&=\frac{\partial}{\partial w_j}\bigg(\sum_{|\alpha|=d}{\binom{d}{\alpha}\Big(\sum_{i=1}^{r}{w_i\bar{v}_i^\alpha}\Big)\Big(\sum_{i=1}^{r}{w_iv_i^\alpha}\Big)}\bigg)\\&=\sum_{|\alpha|=d}{\binom{d}{\alpha}\bigg(\bar{v}_j^\alpha\Big(\sum_{i=1}^{r}{w_iv_i^\alpha}\Big)+{v}_j^\alpha\Big(\sum_{i=1}^{r}{w_i\bar{v}_i^\alpha}\Big)\bigg)}\\&= \sum_{i=1}^{r}{w_i(v_j^*v_i)^d}+\sum_{i=1}^{r}{w_i(v_i^*v_j)^d}=2\sum_{i=1}^{r}{w_i\Re((v_j^*v_i)^d)};
\end{split}
\end{equation*} the third equality is deduced by using \cref{norm} and 1. of lemma \ref{ap}. In addition, we have
          $\frac{\partial f_2}{\partial w_j}=\frac{\partial}{\partial w_j}(\sum_{i=1}^{r}{w_i\bm p(\bar{v}_i)})=\bm p(\bar{v}_j)$,
          $\frac{\partial f_3}{\partial w_j}=\bar{\bm p}(v_j), \text{and~} \frac{\partial f_4}{\partial w_j}=0.$ Thus,
$\frac{\partial f_C}{\partial w_j}=\sum_{i=1}^{r}{w_i\Re((v_j^*v_i)^d)}-\Re(\bar{\bm p}(v_j)).$

Now, for the computation of $\tilde{G}_2$, let $\bm p=\sum_{|\alpha|=d}{\binom{d}{\alpha}\dbl{v}_\alpha \x^\alpha}$, and $1\le k\le n$,
\begin{equation*}
\begin{split}
\frac{\partial f_1}{\partial v_{j,k}}&=\sum_{|\alpha|=d}{\binom{d}{\alpha}\Big(\sum_{i=1}^{r}{w_i\bar{v}_i^\alpha}\Big)(w_j\alpha_k v_j^{\alpha-e_k})}
=w_j\sum_{i=1}^{r}{w_i\langle\partial_{x_k}(v_i^t\x)^d, (v_j^t\x)^{d-1}\rangle_{d-1}}\\
    &=dw_j\sum_{i=1}^{r}{w_i\langle (v_i^t\x)^d, x_k(v_j^t\x)^{d-1}\rangle_d}
    =dw_j\sum_{i=1}^{r}{w_i\bar{v}_{i,k}(v_i^*v_j)^{d-1}},
    \end{split}
\end{equation*}
    the second (resp. third and fourth) equality are deduced by using lemma \ref{ap}. Moreover, we have $\frac{\partial f_2}{\partial v_{j,k}}=0, \frac{\partial f_3}{\partial v_{j,k}}=w_j\sum_{|\alpha|=d}{\binom{d}{\alpha}\bar{\dbl{v}}_\alpha \alpha_k v_j^{\alpha-e_k}}=w_j\partial_{x_k}\bar{\bm p}(v_j), \text{and~}\frac{\partial f_4}{\partial v_{j,k}}=0.$ Thus, $\frac{\partial f_C}{\partial v_{j}}=\frac{1}{2}\Big(dw_j\sum_{i=1}^{r}{w_i({v}_i^*v_j)^{(d-1)}\bar{v}_i}-w_j\nabla_{\x}\bar{\bm p}(v_j)\Big)$.

We have $G^R=K^tG^C$ from \eqref{re1}. By multiplication of these two matrices, we obtain: $G^R=\begin{pmatrix}G_1\\\tilde{G}_2+\overline{\tilde{G}}_2\\\mathbf{i}(\tilde{G}_2-\overline{\tilde{G}}_2)\end{pmatrix}=\begin{pmatrix}G_1\\2\Re(\tilde{G}_2)\\-2\Im(\tilde{G}_2)\end{pmatrix}$. Finally dividing by 2, we get
$G^{R}= \begin{pmatrix}G_1\\\mathrm{\Re}(G_2)\\-\Im(G_2)\end{pmatrix}$, where $G_2=2\tilde{G}_2$, which ends the proof.

\subsubsection{\textbf{Proof of proposition \ref{hessian}}}\label{proof3}
 $H^C$ is given by the following block matrix:
    \begin{equation*}
      H^C=\begin{bmatrix}\left[\frac{\partial^2 f_C}{\partial w_i\partial w_j}\right]_{1\le i,j\le r}&\left[\frac{\partial^2 f_C}{\partial w_i\partial v_j^t}\right]_{1\le i,j\le r}&\left[\frac{\partial^2 f_C}{\partial w_i\partial \bar{v}_j^t}\right]_{1\le i,j\le r}\\\left[\frac{\partial^2 f_C}{\partial v_i\partial w_j}\right]_{1\le i,j\le r}&\left[\frac{\partial^2 f_C}{\partial v_i\partial v_j^t}\right]_{1\le i,j\le r}&\left[\frac{\partial^2 f_C}{\partial v_i\partial \bar{v}_j^t}\right]_{1\le i,j\le r}\\\left[\frac{\partial ^2f_C}{\partial \bar{v}_i\partial w_j}\right]_{1\le i,j\le r}&\left[\frac{\partial^2 f_C}{\partial \bar{v}_i\partial v_j^t}\right]_{1\le i,j\le r}&\left[\frac{\partial^2 f_C}{\partial \bar{v}_i\partial \bar{v}_j^t}\right]_{1\le i,j\le r}\end{bmatrix}.
  \end{equation*}
  We have that $\frac{\partial^2 f}{\partial \bar{z}\partial \bar{z}^t}=\overline{\frac{\partial^2 f}{\partial z\partial z^t}}$, and $\frac{\partial^2 f}{\partial z\partial \bar{z}^t}=\frac{\partial^2 f}{\partial \bar{z}\partial z^t}$, for a complex variable $z$ and a real valued function with complex variables $f$. Using these two relations, we find that $\left[\frac{\partial^2 f_C}{\partial w_i\partial w_j}\right]_{1\le i,j\le r}$, $\left[\frac{\partial^2 f_C}{\partial v_i\partial w_j}\right]_{1\le i,j\le r}$, $\left[\frac{\partial^2 f_C}{\partial v_i\partial v_j^t}\right]_{1\le i,j\le r}$, and $\left[\frac{\partial f_C}{\partial \bar{v}_i\partial v_j^t}\right]_{1\le i,j\le r}$ determine $H^C$. We denote them respectively by $A$, $\tilde{B}$, $\tilde{C}$, and $\tilde{D}$. Herein, we can decompose $H^C$ as:
$$
H^C=\begin{bmatrix}A&{\tilde{B}}^t&{\tilde{B}}^*\\\tilde{B}&\tilde{C}&\tilde{D}^t\\\overline{\tilde{B}}&\tilde{D}&\overline{\tilde{C}}\end{bmatrix}.
   $$
   The computation of these four matrices can be done by taking the formula of $\frac{\partial f_C}{\partial w_j}$ and $\frac{\partial f_C}{\partial v_j}$ obtained in the proof of proposition \ref{gradient}, and using the apolar identities in lemma \ref{ap}. Using \eqref{er2} we obtain:
   $H^R=\begin{bmatrix}A&2{\Re(\tilde{B})}^t&-2{\Im(\tilde{B})}^t\\2\Re(\tilde{B})&2\Re(\tilde{C}+\tilde{D})&-2\Im(\tilde{C}+\tilde{D})\\-2\Im(\tilde{B})&2\Im(\tilde{D}-\tilde{C})&2\Re(\tilde{D}-\tilde{C})\end{bmatrix}$. Finally, for the simplification by 2, as in the previous proof, we redefine the formula of $H^R$ as it is given in proposition \ref{hessian}, where $B$, $C$, and $D$ are respectively equal to two times $\tilde{B}$,$\tilde{C}$, and $\tilde{D}$.


\end{document}